\documentclass[a4paper,12pt]{amsart}
\usepackage{amsmath,amssymb,amsfonts,amsthm}

\usepackage[numeric]{amsrefs}
\usepackage{bm}

\usepackage{graphicx}
\usepackage{ascmac}
\usepackage[all]{xy}
\usepackage{amsthm}
\usepackage[top=25mm,bottom=25mm,left=25mm,right=25mm]{geometry}
\usepackage{tikz}
\usetikzlibrary{cd,arrows,matrix,backgrounds,shapes,positioning,calc,decorations.markings,decorations.pathmorphing,decorations.pathreplacing}
\tikzset{blackv/.style={circle,fill=black,inner sep=3pt,outer sep=3pt},
         whitev/.style={circle,fill=white,draw=black,inner sep=3pt,outer sep=3pt},
         blabel/.style={circle,draw=black,inner sep=1.5pt,outer sep=0pt},
         redv/.style={circle,fill=red,inner sep=3pt,outer sep=3pt},
         block/.style={draw,rectangle split,rectangle split horizontal,rectangle split parts=#1},
         symbol/.style={
           draw=none,
           every to/.append style={
             edge node={node [sloped, allow upside down, auto=false]{$#1$}}}}
}
\usepackage{pxpgfmark}
\usepackage{multirow}
\usetikzlibrary{intersections,arrows,calc,decorations.markings}
\newtheorem{theorem}{Theorem}[section]

\newtheorem{proposition}[theorem]{Proposition}

\newtheorem{conjecture}[theorem]{Conjecture}
\newtheorem{corollary}[theorem]{Corollary}
\newtheorem{lemma}[theorem]{Lemma}

\theoremstyle{definition}
\newtheorem{remark}[theorem]{Remark}
\newtheorem{example}[theorem]{Example}

\makeatletter
 
 \@addtoreset{equation}{section}
\makeatother

\def\xx{\mathbf{x}}

\def\TT{\mathbb{T}}

\def\ZZ{\mathbb{Z}}

\def\Fcal{\mathcal{F}}

\title[Positive integer solutions to $(x+y)^2+(y+z)^2+(z+x)^2=12xyz$]{Positive integer solutions to $(x+y)^2+(y+z)^2+(z+x)^2=12xyz$}
\author{Yasuaki Gyoda}
\keywords{Diophantine equation, cluster algebra, mutation}
\subjclass[2020]{11D25,13F60}
\address{Graduate School of Mathematical Sciences, The University of Tokyo, 3-8-1 Komaba Meguro-ku Tokyo 153-8914, Japan}
\email{gyoda-yasuaki@g.ecc.u-tokyo.ac.jp}
\begin{document}
\maketitle
\begin{abstract}
    In this paper, we give a specific way of describing positive integer solutions of a Diophantine equation $(x+y)^2+(y+z)^2+(z+x)^2=12xyz$ and introduce a generalized cluster pattern behind it.
\end{abstract}

\renewcommand{\thefootnote}{\fnsymbol{footnote}}
\footnote[0]{
The author is grateful to Esther Banaian for her advice in adding Subsections 3.2 and 3.3 in the third arXiv version. The author also would like to thank Tomoki Nakanishi, Wataru Takeda, and Toshiki Matsusaka for their comments. The author also thank Matty van Son for pointing out a mistake. This work was supported by JSPS KAKENHI Grant number JP20J12675.}
\renewcommand{\thefootnote}{\arabic{footnote}}

\section{Introduction}
In this paper, we deal with a Diophantine equation
\begin{align}\label{Diophantine}
    (x+y)^2+(y+z)^2+(z+x)^2=12xyz.
\end{align}
The positive integer solutions to \eqref{Diophantine} include, for example, 
\[(1,1,1), (1,1,3), (1,3,13), (1,13,61), (3,13,217)\dots.\]
We describe all positive integer solutions to \eqref{Diophantine} in a combinatorial way. We give a tree $\mathbb T$ with triplets of positive integers as its vertices in the following steps.

\begin{itemize}
    \item [(1)] The root vertex is $(1,1,1)$ and its child is $(1,1,3)$, and its child is $(1,13,3)$.
    \item [(2)] The generation rule below $(1,13,3)$ is that a parent $(a,b,c)$ has the following two children:
\begin{itemize}
        \item [(i)] if $a$ is the maximal number in $(a,b,c)$, then $(a,b,c)$ has two children $(a,6ac-a-b-c,c)$ and $(a,b,6ab-a-b-c)$. 
        \item [(ii)] if $b$ is the maximal number in $(a,b,c)$, then $(a,b,c)$ has two children $(6bc-a-b-c,b,c)$ and $(a,b,6ab-a-b-c)$. 
        \item [(iii)] if $c$ is the maximal number in $(a,b,c)$, then $(a,b,c)$ has two children $(6bc-a-b-c,b,c)$ and $(a,6ac-a-b-c,c)$. 
\end{itemize}
\end{itemize}
The first few terms are as follows:
\begin{align}\label{tree}
\begin{xy}(0,0)*+{(1,1,1)}="0",(20,0)*+{(1,1,3)}="1",(40,0)*+{(1,13,3)}="2",(55,16)*+{(1,13,61)}="4",(55,-16)*+{(217,13,3)}="5", 
(90,24)*+{(1,291,61)\cdots}="6",(90,8)*+{(4683,13,61)\cdots}="7",(90,-8)*+{(217,13,16693)\cdots}="8",(90,-24)*+{(217,3673,3)\cdots}="9", \ar@{-}"0";"1"\ar@{-}"1";"2"\ar@{-}"2";"4"\ar@{-}"2";"5"\ar@{-}"4";"6"\ar@{-}"4";"7"\ar@{-}"5";"8"\ar@{-}"5";"9"
\end{xy}.
\end{align}
Note that the operation of changing $(a,b,c)$ to $(6bc-a-b-c,b,c)$ is an involution, that is, if we change a number at the same position twice, it returns to its original triple. We also remark that $(1,1,1), (1,1,3)$ and $(1,13,3)$ are also shifted by the operations $(a,b,c)\mapsto(a,6ac-a-b-c,c)$ or $(a,b,c)\mapsto(a,b,6ab-a-b-c)$, although they are not included in the definition.

The main result is the following theorem:

\begin{theorem}\label{Diophantinetheorem}
Every positive integer solution to \eqref{Diophantine} appears exactly once in $\mathbb T$ (up to order).
\end{theorem}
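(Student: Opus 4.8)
The plan is to read the equation through \emph{Vieta jumping}. Expanding and halving, \eqref{Diophantine} is equivalent to $x^2+y^2+z^2+xy+yz+zx=6xyz$, which, viewed as a quadratic in $a$ with $b,c$ fixed, reads $a^2-(6bc-b-c)a+(b^2+bc+c^2)=0$. If $a$ is one root, the other is $a'=6bc-a-b-c$, and by Vieta $a+a'=6bc-b-c$ and $aa'=b^2+bc+c^2$. Thus the operation $(a,b,c)\mapsto(6bc-a-b-c,b,c)$ sends solutions to solutions, is an involution, and the generation rules (i)--(iii) are exactly the two mutations performed at the \emph{non-maximal} coordinates. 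I would prove the theorem in two halves: (A) every vertex of $\mathbb T$ is a positive integer solution, and (B) the assignment is a bijection onto all solutions up to order.

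For (A), the base triples $(1,1,1),(1,1,3),(1,13,3)$ are checked by hand, integrality of each mutation is immediate, and solution-preservation is the Vieta jump above; the only real content is \emph{positivity}. Here I would prove the sharper statement that, when we mutate at a coordinate that is not the current maximum, the new value is the \emph{larger} root of its quadratic: if $a=\max(a,b,c)$ and we mutate at $b$, then $b'=(a^2+ac+c^2)/b\ge a^2/b\ge a$, with strict inequality off the stem, so $b'>a$ becomes the new maximum. This guarantees $b'>0$, guarantees each child is a genuine positive solution, and—crucially—guarantees that mutating the child back at its new maximum returns the parent.

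For (B), introduce the size $\sigma(a,b,c)=a+b+c$. The computation in (A) shows that mutating at a non-maximal coordinate strictly \emph{increases} $\sigma$, whereas I claim mutating at the maximal coordinate strictly \emph{decreases} it. The decrease is equivalent to $a'<a$, i.e.\ to $a^2>aa'=b^2+bc+c^2$ when $a=\max$. Granting this, every solution admits a unique sum-decreasing move, so iterating produces a unique descending path that terminates at a solution admitting no decrease; identifying these terminal (and exceptional) solutions shows they are precisely the stem $(1,1,1),(1,1,3),(1,13,3)$. Reversing the descent, each solution is reached from the root by a unique sequence of child-mutations, which yields both existence and the ``exactly once'' uniqueness; the $\mathfrak S_3$-action on coordinates accounts for the phrase ``up to order.''

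The main obstacle is the inequality $a^2\ge b^2+bc+c^2$ for $a=\max$, i.e.\ that the maximal coordinate is always the larger root. I would argue by contradiction: if $a^2<b^2+bc+c^2\le 3b^2$ then $a<\sqrt3\,b$, while bounding each of the six terms on the left of $x^2+y^2+z^2+xy+yz+zx=6xyz$ by $a^2$ gives $6abc\le 6a^2$, hence $bc\le a<\sqrt3\,b$ and therefore $c=1$; substituting $c=1$ reduces the problem to a one-parameter equation whose remaining cases are finite and explicitly checkable. A companion degeneracy analysis handles ties in the maximum: setting $b=c$ makes the discriminant of the resulting quadratic equal to $4b^2(9b^2-6b-2)$, and for $b\ge2$ the factor $9b^2-6b-2$ lies strictly between $(3b-2)^2$ and $(3b-1)^2$, so it is never a perfect square. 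Hence the only solutions with two equal coordinates are $(1,1,1)$ and $(1,1,3)$; off the stem all three coordinates are distinct, the maximum is unique, the descent is unambiguous, and no two vertices of $\mathbb T$ collide.
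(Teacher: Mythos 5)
Your proposal is correct, and its skeleton is the same as the paper's: mutation (your Vieta jumping) for the existence half (the paper's Proposition \ref{inductive-solution}), a classification of solutions with a repeated entry (Lemma \ref{singular}), and a descent built on the dichotomy that mutating the maximal coordinate decreases a potential while mutating a non-maximal one increases it (Proposition \ref{nonsingular-induction}). The genuine divergence is in the key inequality and how it is proved. The paper proves the stronger statement that a nonsingular solution with $a>b>c$ satisfies $6bc-a-b-c<b$, and its proof is the most technical step of the paper: a sign and continuity analysis of the real two-variable function $g(y,z)=3y^2-6y^2z+2yz+z^2$ of \eqref{equation}. You prove only the weaker bound $6bc-a-b-c<a$, equivalently $a^2>b^2+bc+c^2$, but by a purely arithmetic argument: bounding each of the six terms of $x^2+y^2+z^2+xy+yz+zx=6xyz$ by $a^2$ gives $bc\le a$, which combined with $a<\sqrt{3}\,b$ forces $c=1$, after which $b^2\le a^2<b^2+b+1<(b+1)^2$ forces $a=b$ and then $a=1$. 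This is cleaner than the paper's argument, and the weaker inequality suffices because your sum dichotomy, applied at both endpoints of an edge (if the smaller root were the parent's maximum, mutating there would have to decrease $\sigma$, contradicting that it produces the child), still identifies each vertex's parent as its unique $\sigma$-decreasing neighbour, which is all the tree recursion needs; the paper instead gets this directly from its stronger inequality. Your treatment of repeated entries --- the discriminant $4b^2(9b^2-6b-2)$ trapped strictly between $(3b-2)^2$ and $(3b-1)^2$ for $b\ge 2$ --- is a correct alternative to the paper's factorization $3=(k+l)(k-l)$. Two wording slips, neither fatal: the only solution admitting no $\sigma$-decreasing move is $(1,1,1)$, not the whole stem (what you need, and what is true, is that every descent funnels through $(1,13,3)$ and $(1,1,3)$, since the only nonsingular solutions one mutation above a singular one are permutations of $(1,13,3)$); and ``finite and explicitly checkable'' should simply be replaced by the one-line squeeze quoted above.
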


One of equations whose positive integer solution have the same structure as $\TT$ is the \emph{Markov Diophantine equation},

\begin{align}\label{Diophantine2}
    x^2+y^2+z^2=3xyz.
\end{align}

Consider a tree where $(1,1,3)$ is replaced by $(1,1,2)$, $(1,13,3)$ is replaced by $(1,5,2)$, and $6ab-a-b-c,6bc-a-b-c,6ac-a-b-c$ are replaced by $3ab-c,3bc-a,3ac-b$ respectively in the rules that construct $\mathbb T$.
The first few terms are as follows:
\begin{align}\label{tree2}
\begin{xy}(0,0)*+{(1,1,1)}="0",(20,0)*+{(1,1,2)}="1",(40,0)*+{(1,5,2)}="2",(55,16)*+{(1,5,13)}="4",(55,-16)*+{(29,5,2)}="5", 
(90,24)*+{(1,34,13)\cdots}="6",(90,8)*+{(194,5,13)\cdots}="7",(90,-8)*+{(29,5,433)\cdots}="8",(90,-24)*+{(29,169,2)\cdots}="9", \ar@{-}"0";"1"\ar@{-}"1";"2"\ar@{-}"2";"4"\ar@{-}"2";"5"\ar@{-}"4";"6"\ar@{-}"4";"7"\ar@{-}"5";"8"\ar@{-}"5";"9"
\end{xy}.
\end{align}
This tree is called the \emph{Markov tree}. It is a known classical result that every positive integer solution to \eqref{Diophantine2} appears exactly once in the Markov tree (see \cite{aig}*{Section 3.1}, for example). This equation has been very well studied for many years (see \cite{aig}), and numbers appearing in the Markov tree are called \emph{Markov numbers}. In addition, the Markov Diophantine equation has the following famous open problem.

\begin{conjecture}[Markov's uniqueness conjecture]
Every Markov number appears exactly once as the maximal number of some positive integer solution to \eqref{Diophantine2}.
\end{conjecture}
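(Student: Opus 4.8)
This statement is the long-standing, and still open, Markov uniqueness conjecture; accordingly what I can offer is a program together with an honest account of where it breaks down. The plan is to pass from triples to their position in the Markov tree \eqref{tree2}. Just as in Theorem~\ref{Diophantinetheorem}, every positive solution of \eqref{Diophantine2} occupies a unique vertex of this tree, and below $(1,5,2)$ the vertices are naturally indexed by the Stern--Brocot (Farey) tree of rationals $p/q\in(0,1)$. Writing $m_{p/q}$ for the entry freshly produced at the vertex labelled $p/q$ --- which, since each child replaces a non-maximal entry by a strictly larger one, is exactly the maximum of the triple sitting there --- the conjecture becomes the assertion that the map $p/q\mapsto m_{p/q}$ is injective. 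So the task is to prove that two distinct vertices never carry the same maximum.

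The first genuine input is the number-theoretic constraint going back to Frobenius. Since the entries of any Markov triple are pairwise coprime, reducing \eqref{Diophantine2} modulo the maximum $c$ gives $a^2+b^2\equiv0\pmod c$, so $u:=ab^{-1}\bmod c$ satisfies $u^2\equiv-1\pmod c$. I would first show that the assignment sending a triple with maximum $c$ to its $u$ up to sign is injective, so that the number of triples with a given maximum $c$ is at most the number of square roots of $-1$ modulo $c$ taken up to sign. When $c$ is a prime power (or twice one) there is at most one such root up to sign, and uniqueness follows immediately; this already recovers the classical cases. The difficulty is precisely that when $c$ has several distinct prime factors there are several admissible roots, and one must show that at most one of them is realised by an actual triple with maximum $c$. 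Controlling this ``Markov-admissibility'' of a square root is the heart of the matter, and is exactly what no known argument supplies in general.

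The second, complementary route --- and the one closest to the cluster-algebraic and snake-graph circle of ideas surrounding this paper --- is to reduce injectivity to monotonicity. Here I would use the description of $m_{p/q}$ as a continuant, equivalently as the number of perfect matchings of the snake graph attached to the continued-fraction expansion of $p/q$, and try to establish the three monotonicity statements: that $m_{p/q}$ strictly increases with $p$ for fixed $q$, strictly increases with $q$ for fixed $p$, and strictly increases along lines of constant $p+q$. Each of these would be attacked by building explicit injections, together with strict inclusions, between the matching sets of the relevant snake graphs, a comparison that is local along a single monotone family. The main obstacle, and the reason the conjecture remains open, is global: two Farey fractions that are incomparable in the tree need not lie on any common monotone line, so even the full set of monotonicity results does not obviously linearly order all the maxima, and the local matching injections give no direct handle on such incomparable pairs. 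Bridging that gap --- propagating the local comparisons into a genuine total ordering of $\{m_{p/q}\}$ --- is the step I expect to be insurmountable with present techniques.
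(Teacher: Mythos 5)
The paper contains no proof of this statement, and none exists: it is stated there, exactly as you recognize, as the open Markov uniqueness conjecture, in parallel with the author's own Conjecture~\ref{conjecture} for equation \eqref{Diophantine}. Your refusal to claim a proof is therefore the correct outcome, and there is nothing in the paper to compare your argument against step by step.

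As a survey of the known partial approaches, your program is accurate and well targeted. The reduction of the tree below $(1,5,2)$ to the Stern--Brocot indexing is legitimate, since the Markov analogue of Proposition~\ref{nonsingular-induction} guarantees that each child's fresh entry is its maximum, so uniqueness is indeed injectivity of $p/q\mapsto m_{p/q}$. Your first route is the classical one: pairwise coprimality plus reduction of \eqref{Diophantine2} modulo the maximum $c$ gives $u^2\equiv-1\pmod c$, the triple-to-$u$ map (up to sign) is injective, and this settles uniqueness when $c$ is a prime power or twice a prime power --- these are the known theorems of Baragar, Button, Schmutz and others --- while the case of several prime factors is precisely where every known argument stops, as you say. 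One point worth sharpening in your second route: the three monotonicity statements you propose to prove (fixed numerator, fixed denominator, fixed sum $p+q$) are Aigner's conjectures and are by now theorems, established essentially by the snake-graph/continuant comparisons you describe (Rabideau--Schiffler for the first two, and subsequent work for the fixed-sum case); so that part of your program is not open but also, as you correctly diagnose, insufficient, because Farey-incomparable fractions escape all three monotone families and no total ordering of the $m_{p/q}$ follows. Your location of the obstruction --- passing from local matching injections to a global injectivity statement, or equivalently controlling which square roots of $-1$ are Markov-admissible --- is exactly where the problem stands.
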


In parallel with this conjecture, we can consider the following conjecture for the equation \eqref{Diophantine}.

\begin{conjecture}\label{conjecture}
Every number appearing in the tree $\mathbb T$ appears exactly once as the maximal number of some positive integer solution to \eqref{Diophantine}.
\end{conjecture}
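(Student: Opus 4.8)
The plan is to follow the template of the classical partial results toward Markov's uniqueness conjecture, adapting them to the Eisenstein-integer arithmetic that governs \eqref{Diophantine}. First I would rewrite the equation in the equivalent reduced form
\[
x^2+y^2+z^2+xy+yz+zx=6xyz,
\]
and read off the Vieta structure: fixing $a,b$, the two values of the third coordinate are the roots of $t^2+(a+b-6ab)t+(a^2+ab+b^2)=0$, so they satisfy $t+t'=6ab-a-b$ and $t\,t'=a^2+ab+b^2$; this is exactly the mutation generating $\mathbb T$. By Theorem~\ref{Diophantinetheorem} the vertices of $\mathbb T$ are precisely the positive solutions up to order, and a direct induction along the edges shows that (i) the three entries of every vertex are pairwise coprime and (ii) below the initial segment the newly created entry of each child strictly exceeds the maximum of its parent (a one-line estimate: if $c$ is maximal and we mutate at a non-maximal position, the new value is $6ac-a-b-c\ge 4c-1>c$). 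Hence, apart from the finitely many vertices $(1,1,1),(1,1,3),(1,13,3)$ checked by hand, every number of $\mathbb T$ is created exactly once as the maximum of the vertex at which it first appears, and monotonicity forbids repetitions within a single branch. Thus Conjecture~\ref{conjecture} is equivalent to the injectivity statement that no integer is the maximal entry of two distinct vertices.

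Next I would extract the governing congruence. If $c$ is the maximal entry of a solution $(a,b,c)$, then reducing the equation modulo $c$ gives $a^2+ab+b^2\equiv 0 \pmod c$, while the Vieta relation upgrades this to the exact identity $a^2+ab+b^2=c\,c'$ with $c'=6ab-a-b-c<c$ the corresponding entry of the parent. Since $a,b$ are coprime to $c$, the residue $u:=ab^{-1}\bmod c$ is well defined and satisfies $u^2+u+1\equiv 0 \pmod c$; that is, $u$ is a primitive cube root of unity modulo $c$. Equivalently, writing $\omega$ for a primitive cube root of unity, $a^2+ab+b^2$ is the Eisenstein norm $N_{\mathbb{Q}(\omega)/\mathbb{Q}}(a-b\omega)$, so the descent $c\mapsto c'$ is a factorisation in $\mathbb{Z}[\omega]$, exactly paralleling the role of the Gaussian integers and $u^2+1\equiv 0$ in Markov theory. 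I would then prove the key reduction lemma by a Gauss/Euclidean descent on the pair $(a,b)$ subject to $a^2+ab+b^2=c\,c'$ and $0<a,b<c$: the unordered pair $\{a,b\}$, and hence the whole triple, is recovered from $c$ and the residue $u$ alone. This makes the assignment (triple with maximum $c$) $\mapsto$ (its residue $u$) injective, so the number of triples with maximum $c$ is bounded by the number of solutions of $u^2+u+1\equiv 0\pmod c$.

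With this reduction in hand, uniqueness follows in all cases where the congruence has essentially one solution. The swap $a\leftrightarrow b$ acts on residues by $u\mapsto u^{-1}=u^2$, sending each solution to the other cube root and preserving the unordered triple; thus solutions are counted in pairs $\{u,u^2\}$ (and $u\ne u^2$ since $u\ne 1$). When $c=p^k$ is a prime power with $p\equiv 1\pmod 3$, Hensel lifting shows $u^2+u+1\equiv 0\pmod c$ has exactly the two solutions $\omega,\omega^2$, forming a single pair, so at most one unordered triple can have maximum $c$; this would settle Conjecture~\ref{conjecture} unconditionally for prime-power maxima. The same bookkeeping handles $c$ with a single prime factor $\equiv 1\pmod 3$ (together with the small, separately analysed contribution at the prime $3$), and one notes in passing that pairwise coprimality forces every prime factor of a maximal $c$ to be $\equiv 1\pmod 3$ or equal to $3$, since otherwise the congruence is unsolvable.

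The genuine obstacle is the general $c$. If $c$ has $t\ge 2$ distinct prime factors congruent to $1$ modulo $3$, the Chinese Remainder Theorem produces $2^{t}$ primitive cube roots of unity modulo $c$, hence $2^{t-1}$ swap-pairs, and the injectivity lemma gives only the upper bound $2^{t-1}$ for the number of triples with maximum $c$. To conclude one must show that all but one of these residue-pairs fail the size constraint $0<a,b<c$ with $a^2+ab+b^2=c\,c'$, $c'\ge 1$, i.e.\ do not extend to an actual solution of \eqref{Diophantine}. This is precisely the arithmetic heart of the problem and is the exact analogue of the century-old open core of Markov's uniqueness conjecture; the Eisenstein formulation offers no visible simplification over the Gaussian one. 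I therefore expect this final step to be the main difficulty, and would present the prime-power (and single-prime-factor) cases as the unconditional content while leaving the general statement as the conjecture it is.
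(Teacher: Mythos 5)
The statement you were asked to prove is Conjecture \ref{conjecture}, and the paper does not prove it: it is stated, deliberately, as the analogue of Markov's uniqueness conjecture, a century-old open problem, and the paper establishes only the strictly weaker Corollary \ref{cor} (every number in $\mathbb T$ occurs \emph{at least} once as a maximum). Your proposal is correctly calibrated on this meta-level: you do not claim a proof, and your final paragraph identifies the genuine open core --- ruling out, for $c$ with $t\ge 2$ prime factors $\equiv 1 \pmod 3$, all but one of the $2^{t-1}$ residue-pairs --- which is exactly why this remains a conjecture in the paper. Your verified scaffolding is also sound and consistent with the paper's results: the reduced form $x^2+y^2+z^2+xy+yz+zx=6xyz$, the Vieta relations $t+t'=6ab-a-b$, $tt'=a^2+ab+b^2$ matching the mutation generating $\mathbb T$, the growth estimate $6ac-a-b-c\ge 4c-1>c$ (the paper's Proposition \ref{nonsingular-induction}), pairwise coprimality (the paper's Corollary 2.5), the resulting congruence $u^2+u+1\equiv 0\pmod c$ for $u=ab^{-1}$, the Eisenstein-norm interpretation $a^2+ab+b^2=N(a-b\omega)$ in $\mathbb{Z}[\omega]$, and the Hensel count of cube roots of unity modulo prime powers are all correct.

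The genuine gap is in what you present as the \emph{unconditional} content. Your ``key reduction lemma'' --- that the unordered pair $\{a,b\}$, hence the triple, is recovered from $(c,u)$ alone --- is asserted via ``Gauss/Euclidean descent'' but never proved, and it is not a routine descent. A naive reduction on $(a,b)$ subject to $a^2+ab+b^2=cc'$, $0<a,b<c$ does not close up: the congruence class $a\equiv ub \pmod c$ together with $c\mid a^2+ab+b^2$ admits many lattice points in the box that lie on no vertex of $\mathbb T$, and the descent steps do not preserve membership in the solution tree, so injectivity cannot be read off locally. In the Markov case this injectivity (maximum plus residue determines the triple) is precisely the substantive kernel of the Baragar--Button--Schmutz prime-power uniqueness theorems, and its known proofs go through nontrivial global structure --- Cohn matrices and palindromic continued fractions, or hyperbolic geodesics on the modular torus (see \cite{aig}) --- not a one-line argument. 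Consequently, even your prime-power and single-prime-factor cases are not established as written; to make them rigorous in this Eisenstein setting you would need an analogue of that machinery, for which the natural starting point is the paper's own Section 3.3, where the snake-graph/continued-fraction correspondence of \cite{cs18} (via \cite{bake}) attaches to each arc a fraction whose numerator is the tree entry --- a plausible route to a palindromicity-and-reconstruction lemma, but one that still has to be carried out.
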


In the background of this similarity, there are structures derived from cluster algebra theory. Cluster algebras are algebras with a certain combinatorial structure, and this structure has been applied in various fields (for more information about cluster algebra, see \cites{fzi,fziv,fzw}, etc). This structure also appears in the positive integer solutions to the two equations mentioned above. To be more precise, while the Markov Diophantine equation \eqref{Diophantine2} has a specific ``cluster pattern'', the equation \eqref{Diophantine} has a specific ``generalized cluster pattern''. For detail of a relation between the cluster pattern and the Markov Diophantine equation, see \cite{fzw}*{Section 3.4} (and also Remark \ref{remark} in this paper). In this paper, we will explain the generalized cluster pattern inducing the structure of positive integer solutions to the equation \eqref{Diophantine}. Furthermore, by using this structure, we give some combinatorial implication of positive integers appearing in the solutions based on some previous works \cites{chsh, bake, cs18} on cluster algebra.

\begin{remark}
Theorem \ref{Diophantine} was discovered by Esther Banaian independently of the author at about the same time. See also Banaian's doctoral dissertation \cite{Bana}.
\end{remark}

\section{Proof of Theorem \ref{Diophantinetheorem} and its corollaries}
We begin with the following proposition:
\begin{proposition}\label{inductive-solution}
If $(x,y,z)=(a,b,c)$ is a positive integer solution to \eqref{Diophantine}, then so are $(6bc-a-b-c,b,c),(a,6ac-a-b-c,c),(a,b,6ab-a-b-c)$.
\end{proposition}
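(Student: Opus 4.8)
The plan is to use the Vieta-jumping technique, taking advantage of the fact that \eqref{Diophantine} is symmetric under every permutation of $x,y,z$. Because of this symmetry it suffices to treat a single one of the three transformations, say $(a,b,c)\mapsto(a,b,6ab-a-b-c)$; the other two then follow by relabeling the coordinates. First I would expand the left-hand side and divide by $2$ to put the equation into the equivalent polynomial form
\begin{equation}
x^2+y^2+z^2+xy+yz+zx=6xyz.
\end{equation}

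Next, fixing $x=a$ and $y=b$, I would regard this relation as a quadratic equation in the single variable $z$, namely
\begin{equation}
z^2+(a+b-6ab)z+(a^2+b^2+ab)=0.
\end{equation}
Since $(a,b,c)$ is assumed to be a solution, $c$ is one root of this quadratic. Let $c'$ denote the other root. By Vieta's formulas, $c+c'=6ab-a-b$ and $c\,c'=a^2+b^2+ab$. The first identity gives $c'=6ab-a-b-c$, which is exactly the claimed new third coordinate and is manifestly an integer.

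It then remains to check positivity, which is where the product relation does the work: from $c\,c'=a^2+b^2+ab$ together with the positivity of $a,b,c$ we get $c'=(a^2+b^2+ab)/c>0$, so $c'$ is a positive integer. Because $c'$ is a root of the displayed quadratic, the triple $(a,b,c')$ satisfies the polynomial relation above and hence \eqref{Diophantine}. Applying the same argument with the roles of the coordinates permuted shows that $(6bc-a-b-c,b,c)$ and $(a,6ac-a-b-c,c)$ are positive integer solutions as well.

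I do not expect a serious obstacle here; the only point requiring care is that one must invoke both Vieta relations — the sum of the roots to recover the stated formula and confirm integrality, and the product of the roots to guarantee positivity — rather than attempting to read everything off a single identity.
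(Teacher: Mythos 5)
Your proof is correct, and it reaches the conclusion by a somewhat different (and slicker) route than the paper. The paper's proof is a direct verification: it expands $(6bc-a-b-c+b)^2+(6bc-a-b-c+c)^2+(b+c)^2$ and $12(6bc-a-b-c)bc$ separately and uses the hypothesis to reduce both sides to the same polynomial in $a,b,c$; only then does it address positivity, via the identity $6bc-a-b-c=(b^2+c^2+bc)/a$ — which is precisely your Vieta product relation in disguise. Your version packages the same content structurally: after rewriting the equation as $x^2+y^2+z^2+xy+yz+zx=6xyz$, the observation that $c'=6ab-a-b-c$ is the second root of $z^2+(a+b-6ab)z+(a^2+ab+b^2)=0$ delivers all three needed facts at once — integrality (from the root sum), the fact that $(a,b,c')$ is again a solution (because $c'$ is a root, no expansion required), and positivity (from the root product $cc'=a^2+ab+b^2>0$, where dividing by $c$ is legitimate since $c\geq 1$). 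Beyond avoiding the brute-force computation, the Vieta framing gives you for free the fact, noted separately in the paper's introduction, that each of the three moves is an involution: applying the same move twice just swaps the two roots back. The only thing the paper's explicit expansion buys in exchange is that it never needs to invoke the structure of quadratics, but both proofs are equally elementary and equally complete.
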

\begin{proof}
We prove only that $(6bc-a-b-c,b,c)$ is a positive solution. First, we show that $(6bc-a-b-c,b,c)$ is a solution to \eqref{Diophantine}.
We have
\begin{align*}
    &(6 b c - a - b - c + b)^2 + (6 b c - a - b - c + c)^2 + (b + c)^2\\
    =&2 a^2 + 2 a b + 2 b^2 + 2 a c + 2 b c - 24 a b c - 12 b^2 c + 2 c^2 - 12 b c^2 + 72 b^2 c^2\\
    =&-2 a^2 - 2 a b - 2 a c + 72 b^2 c^2 - 12 b^2 c - 2 b^2 - 12 b c^2 - 2 b c - 2 c^2,
\end{align*}
and
\begin{align*}
    &12(6bc-a-b-c)bc\\
    =& 12(6bc-b-c)bc-12abc\\
    =&12(6-b-c)bc-((a+b)^2+(b+c)^2+(c+a)^2)\\
    =&-2 a^2 - 2 a b - 2 a c + 72 b^2 c^2 - 12 b^2 c - 2 b^2 - 12 b c^2 - 2 b c - 2 c^2.
\end{align*}
Therefore, we have
\begin{align*}
    (6 b c - a - b - c + b)^2 + (6 b c - a - b - c + c)^2 + (b + c)^2=12(6bc-a-b-c)bc.
\end{align*}
This is an equality substituting $x=6bc-a-b-c,y=b,z=c$ in \eqref{Diophantine}. 

Next, we prove the positivity. By \eqref{Diophantine}, we have
\begin{align}\label{a^2+b^2+ab}
    6bc-a-b-c= \dfrac{6abc-a^2-ab-ac}{a}=\dfrac{b^2+c^2+bc}{a}.
\end{align}
Therefore, the positivity follows from \eqref{a^2+b^2+ab}.
\end{proof}
The next step is to determine the solutions that contain two or more of the same number.
\begin{lemma}\label{singular}
In the positive integer solutions to \eqref{Diophantine}, the only solutions that contain two or more of the same number are $(1,1,1)$ and $(1,1,3)$ up to order.
\end{lemma}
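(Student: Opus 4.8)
The plan is to exploit the full symmetry of \eqref{Diophantine}: both sides are invariant under all permutations of $x,y,z$, so I may assume the repeated value sits in the last two coordinates and write the candidate solution as $(a,b,b)$. Substituting $x=a$, $y=z=b$ and simplifying $2(a+b)^2+(2b)^2=12ab^2$ collapses the problem to the single two-variable equation
\[
a^2+2ab+3b^2=6ab^2 .
\]
Note that the case of three equal coordinates is already the subcase $a=b$ here, so no separate treatment is required.

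Next I would read this as a quadratic in $a$, namely $a^2+2b(1-3b)a+3b^2=0$, and observe that a positive integer root $a$ can exist only when the discriminant is a perfect square. A direct computation gives the discriminant $4b^2(9b^2-6b-2)$, so, since $4b^2$ is already a square, the decisive requirement is that $9b^2-6b-2$ be a perfect square. The key algebraic move is the completion
\[
9b^2-6b-2=(3b-1)^2-3 .
\]

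Setting $9b^2-6b-2=m^2$ with $m\ge 0$ then turns the condition into the difference of squares $(3b-1)^2-m^2=3$, i.e.\ $(3b-1-m)(3b-1+m)=3$. For $b\ge 1$ both factors are positive integers whose product is the prime $3$, which forces $3b-1-m=1$ and $3b-1+m=3$, hence $b=1$ (and $m=1$). Feeding $b=1$ back into the quadratic yields $a^2-4a+3=0$, whose roots $a=1,3$ recover precisely the triples $(1,1,1)$ and $(1,1,3)$ up to order.

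I expect the main — and essentially the only — obstacle to be spotting the completion $9b^2-6b-2=(3b-1)^2-3$, which converts an a priori infinite Pell-type search for square values of a quadratic into the finite factorization of $3$; once that identity is in hand, the remaining steps are elementary.
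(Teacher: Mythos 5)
Your proposal is correct and follows essentially the same route as the paper: after substituting the repeated value, both arguments reduce to requiring that $(3b-1)^2-3$ be a perfect square (the paper writes it as $(3a-1)^2-3$ with the repeated value named $a$, obtained from the quadratic formula rather than the discriminant, which is the same computation) and then factor $3$ as a difference of squares to force $b=1$ and the roots $1,3$. The differences are purely cosmetic, so nothing further is needed.
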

\begin{proof}
Let $(a,b,c)$ be a positive integer solution to \eqref{Diophantine} that contains two or more of the same number. We can assume $a=b$ without loss of generality.
Then, by substituting $(a,a,c)$ for $(x,y,z)$ in \eqref{Diophantine}, we have
\[
2(a+c)^2+4a^2=12a^2c.
\]
Therefore, we have 
\[
c=-a+3a^2\pm a\sqrt{(3a-1)^2-3}.
\]
Let $k=3a-1$. In this case, since $a$ is an integer, $k^2-3$ must be written as $l^2$ using some integer $l$. Therefore, we have $3=k^2-l^2=(k+l)(k-l)$. Therefore, we have $k=\pm 2,l=\pm1$. Since $a> 0$, we have $k=2,a=1$, and $c=1,3$. This finishes the proof.
\end{proof}
Solutions that contain two or more of the same number, such as $(1,1,1)$ or $(1,1,3)$, are called \emph{singular}, and other positive integer solutions to \eqref{Diophantine} \emph{nonsingular}\footnote{These names ``singular" and ``non-singular" are based on Aigner's book \cite{aig}.}.

\begin{proposition}\label{nonsingular-induction}
Let $(x,y,z)=(a,b,c)$ be a nonsingular positive integer solution to \eqref{Diophantine}, and we assume that $a>b>c$. Then we have
\begin{itemize}
\item[(1)] $6ac-a-b-c>a(>c)$,
\item[(2)] $6ab-a-b-c>a(>b)$,
\item[(3)] $b>6bc-a-b-c$.
\end{itemize}
\end{proposition}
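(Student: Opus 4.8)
The plan is to reduce all three inequalities to closed-form expressions for the mutated entries, in the spirit of \eqref{a^2+b^2+ab}. Since \eqref{Diophantine} is symmetric in $x,y,z$, the same computation that produced \eqref{a^2+b^2+ab} gives, for the solution $(a,b,c)$,
\begin{align*}
6bc-a-b-c=\frac{b^2+c^2+bc}{a},\quad 6ac-a-b-c=\frac{a^2+c^2+ac}{b},\quad 6ab-a-b-c=\frac{a^2+b^2+ab}{c}.
\end{align*}
I would record these three identities first, since each one clears the offending entry into a denominator and converts the claimed inequality into a polynomial inequality.

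For (1) and (2) this is immediate and uses nothing beyond $a>b>c$. Claim (1) reads $\frac{a^2+c^2+ac}{b}>a$, i.e. $a^2+c^2+ac>ab$; as $a>b$ gives $a^2>ab$ and $c^2+ac>0$, this holds. Likewise (2) reads $a^2+b^2+ab>ac$, which follows from $a>c$ (hence $a^2>ac$) together with $b^2+ab>0$.

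The main work is (3), which after clearing the denominator becomes $ab>b^2+c^2+bc$; here the ordering $a>b>c$ alone does not suffice and the equation itself must enter. I would rewrite \eqref{Diophantine} as $a^2+b^2+c^2+ab+bc+ca=6abc$ and read it as a quadratic in the first entry: the polynomial $f(t)=t^2+(b+c-6bc)t+(b^2+c^2+bc)$ vanishes at $t=a$ and at its companion root $t=6bc-a-b-c$. Evaluating at $t=b$ gives $f(b)=3b^2+2bc+c^2-6b^2c$, and I would show $f(b)<0$: since $c\ge 1$ one has $6b^2c\ge 6b^2$, while $6b^2-(3b^2+2bc+c^2)=(3b+c)(b-c)>0$ because $b>c$. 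As $f$ opens upward and $a>b$ is one of its roots, the inequality $f(b)<0$ places $b$ strictly between the two roots, forcing the companion root $6bc-a-b-c$ to lie strictly below $b$, which is exactly (3).

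The only genuine obstacle is finding the right device for (3); once the defining relation is viewed as a quadratic in $a$ and the value $t=b$ is tested, the sign computation is routine, the single arithmetic input being $c\ge 1$, valid because $c$ is a positive integer.
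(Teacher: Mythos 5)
Your proof is correct, and for parts (1) and (2) it coincides with the paper's argument (same identities, same one-line estimates). For part (3) you share the paper's setup — both view the defining relation as a quadratic whose roots are $a$ and $6bc-a-b-c$, and both reduce the claim to showing $f(b)=3b^2+2bc+c^2-6b^2c<0$ — but you diverge at the sign verification. The paper proves $f(b)<0$ by a real-analytic detour: it studies the function $g(y,z)=3y^2-6y^2z+2yz+z^2$ on the region $y>1$, $y>z$, shows $g$ cannot vanish there unless $z<1$, evaluates $g(2,1)=-7<0$, and invokes continuity/connectedness to propagate the negative sign to $(b,c)$. You instead give a direct algebraic estimate: since $c\ge 1$ one has $6b^2c\ge 6b^2$, and
\[
6b^2-(3b^2+2bc+c^2)=3b^2-2bc-c^2=(3b+c)(b-c)>0
\]
because $b>c>0$, whence $f(b)\le 3b^2+2bc+c^2-6b^2<0$. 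This is a genuine simplification: it uses the integrality hypothesis ($c\ge 1$) exactly once, avoids solving the quadratic in $y$, the case analysis on the sign of $3-6z$, and the connectedness argument entirely, and is checkable in two lines. What the paper's approach buys in exchange is a description of the real locus where the inequality holds (all of $y>1$, $y>z\ge 1$), but for the proposition as stated your route is shorter and equally rigorous.
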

\begin{proof}
We prove (1). It suffices to show that $6ac-a-b-c>a$.
By (an analogue of) \eqref{a^2+b^2+ab}, we have
\begin{align*}
    6ac-a-b-c-a&=\dfrac{a^2+c^2+ac}{b}-a=\dfrac{a^2+c^2+ac-ab}{b}>\dfrac{a^2+c^2+ac-a^2}{b}\\&=\dfrac{c^2+ac}{b}>0.
\end{align*}
We can show (2) in the same way as (1). We will show (3).
We set
\begin{align*}
    f(x):=(x-a)(x-(6bc-a-b-c))=x^2-(6bc-b-c)x+(b^2+c^2+bc)
\end{align*}
(we remark that $a(6bc-a-b-c)=b^2+c^2+bc$ is used). It suffices to show that 
\[f(b)=3b^2-6b^2c+2bc+c^2<0.\] We consider a function from $\mathbb{R}^2$ to $\mathbb{R}$
\begin{align}\label{equation}
    g(y,z)=3y^2-6y^2z+2yz+z^2.
\end{align}
We remark that $g(b,c)=f(b)$.
First, we show that if $y$ and $z$ satisfy $y> 1$, $y>z$ and $g(y,z)=0$, then we have $z<1$. We have
\begin{align*}
    y=\dfrac{-z\pm z\sqrt{-2+6z}}{3-6z}
\end{align*}
by $g(y,z)=0$. When $\sqrt{-2+6z}\leq 1$, since $z\leq\dfrac{1}{2}<1$, we do not need to consider this case. We assume that $\sqrt{-2+6z}>1$. Then, by $3-6z<0$ and $y>0$, we have
\begin{align}\label{sign-determine}
    -z\pm z\sqrt{-2+6z}<0.
\end{align}
If the sign $\pm$ in \eqref{sign-determine} is $+$, then there is not $z$ satisfying \eqref{sign-determine}, thus the sign is $-$.
Therefore, it suffices to consider
  \[\dfrac{-z- z\sqrt{-2+6z}}{3-6z}>1.\]
Since $3-6z<0$, we have 
\begin{align}\label{condition1}
5z-z\sqrt{-2+6z}-3<0,
\end{align}
thus we get $z>3,\dfrac{1}{3}\leq z<1,z=0$. By $y>z$, we have 
\begin{align}\label{condition2}
-z- z\sqrt{-2+6z}<z(3-6z),
\end{align}
thus we get $\dfrac{1}{3}\leq z<1$. By taking the intersection of solutions to \eqref{condition1},\eqref{condition2}, and $3-6z<0$, we have $\dfrac{1}{2}<z<1$, in particular, $z<1$. Next, we will show $f(b)<0$. When $y=2,z=1$, we have
\begin{align}
    g(2,1)=12-24+4+1=-7<0.
\end{align}
Because of the continuity of $g$ and the fact that $g(y,z)=0, y> 1$ and $y>z$ implies $z<1$, we have $g(b,c)=f(b)<0$.
\end{proof}
Now, we will show Theorem \ref{Diophantinetheorem}.

\begin{proof}[Proof of Theorem \ref{Diophantinetheorem}]
By Proposition \ref{inductive-solution} and the fact that $(x,y,z)=(1,1,1)$ is a positive integer solution to \eqref{Diophantine}, all vertices in $\mathbb T$ are positive integer solutions to \eqref{Diophantine}. Suppose that $(x,y,z)=(a,b,c)$ is a nonsingular positive integer solution to \eqref{Diophantine}. We assume $a$ is the maximal number of $(a,b,c)$. Then, by Proposition \ref{nonsingular-induction}, we have a positive integer solution $(x,y,z)=(6bc-a-b-c,b,c)$ to \eqref{Diophantine} whose maximal number is smaller than $a$. This process can be continued as long as the solution is nonsingular. However, since the solutions that appear in this operation are always positive integer solutions, a singular solution will appear in a finite number of the operations. By Lemma \ref{singular}, when a nonsingular solution changes to a singular solution, the nonsingular solution is $(1,13,3)$ and the singular solution is $(1,1,3)$. Therefore, by following this operation in reverse, $(a,b,c)$ is contained in the vertices of the tree $\mathbb T$. We prove the uniqueness. If not, we see that $(1,13,3)$ is not unique by repeating above operations. However, Proposition 2.3 and the generation rule of $\TT$ show that each child has a larger maximum than its parent, and the maximal number of $(a,b,c)$ except for $(1,1,1), (1,1,3), (1,13,3)$ written in $\eqref{tree}$ is larger than $13$. This is a contradiction.
\end{proof}

As in the Markov Diophantine equation \eqref{Diophantine2}, there are several corollaries that can be established.

\begin{corollary}
For any positive integer solution $(x,y,z)=(a,b,c)$ to \eqref{Diophantine}, all pairs in $a,b,c$ are relatively prime.
\end{corollary}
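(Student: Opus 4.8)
The plan is to combine Theorem~\ref{Diophantinetheorem} with an induction along the tree $\TT$. Since every positive integer solution occurs in $\TT$, and the three generation operations are the only ways to pass from a vertex to its children, it suffices to prove two things: first, that the entries of the initial vertices $(1,1,1)$, $(1,1,3)$, $(1,13,3)$ are pairwise coprime; and second, that each of the operations $(a,b,c)\mapsto(6bc-a-b-c,b,c)$, $(a,b,c)\mapsto(a,6ac-a-b-c,c)$, and $(a,b,c)\mapsto(a,b,6ab-a-b-c)$ preserves pairwise coprimality. The first point is an immediate direct check (and for the singular vertices it also follows from Lemma~\ref{singular}).

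For the second point I would use the arithmetic form of \eqref{Diophantine}, obtained by expanding and halving:
\begin{align*}
    a^2+b^2+c^2+ab+bc+ca=6abc.
\end{align*}
This is exactly the identity behind \eqref{a^2+b^2+ab}: writing $a'=6bc-a-b-c$, it yields $a\,a'=b^2+bc+c^2$, and symmetrically $b\,b''=a^2+ac+c^2$ for $b''=6ac-a-b-c$, and $c\,c'''=a^2+ab+b^2$ for $c'''=6ab-a-b-c$.

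Now suppose $(a,b,c)$ has pairwise coprime entries and consider, say, the child $(a',b,c)$. The pair $(b,c)$ is unchanged, so it remains coprime. If a prime $p$ divided both $a'$ and $b$, then from $a\,a'=b^2+bc+c^2$ we would get $p\mid c^2$, hence $p\mid c$, contradicting $\gcd(b,c)=1$; thus $\gcd(a',b)=1$, and the same argument with the roles of $b$ and $c$ exchanged gives $\gcd(a',c)=1$. The remaining two operations are handled identically using the symmetric identities above. Equivalently, one may phrase this as a descent: the maximal-entry-reducing operation preserves coprimality, so descending to a singular solution (which is trivially pairwise coprime) and then reversing the steps yields the claim.

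I do not expect a serious obstacle here, since this is the analogue of the classical pairwise-coprimality argument for Markov numbers. The only points demanding care are verifying that the product identity $a\,a'=b^2+bc+c^2$ (and its two symmetric versions) is set up correctly, and confirming that the base vertices are genuinely pairwise coprime so that the induction can start.
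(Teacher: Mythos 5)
Your proof is correct, and it is organized as the mirror image of the paper's. The paper argues by descent and contradiction: given any solution in which two entries share a prime divisor $d'$, it rearranges \eqref{Diophantine} as $z^2=6xyz-x^2-y^2-xy-yz-zx$ to conclude that $d'$ divides all three entries, then observes that the entry-replacing operations preserve divisibility by $d'$, so descending along $\TT$ forces $d'$ to divide the entries of $(1,13,3)$, a contradiction. You instead run the induction upward from the root: pairwise coprimality of a parent is transferred to each child via the product identity $a\,a'=b^2+bc+c^2$ (which is exactly \eqref{a^2+b^2+ab} multiplied out), since a prime dividing $a'$ and $b$ would be forced to divide $c$. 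The underlying arithmetic fact is the same in both proofs — the equation makes a prime dividing two entries divide the third — but your version is a direct preservation argument rather than a proof by contradiction, and it cleanly separates the two ingredients: the base check at $(1,1,1),(1,1,3),(1,13,3)$ and the closure of coprimality under the three operations. The paper's descent has the mild advantage of not needing the product identity explicitly (it reuses the rearranged equation at the hypothetical bad solution); yours has the advantage that each inductive step is a one-line divisibility argument with no appeal to the structure of the descent from Theorem \ref{Diophantinetheorem} beyond the statement that every solution occurs in $\TT$. Both are complete proofs.
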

\begin{proof}
The claim is true for $(a,b,c)=(1,1,1),(1,1,3),(1,13,3)$. 
We prove only that $a$ and $b$ are relatively prime. By transforming \eqref{Diophantine} as \[z^2=6xyz-x^2-y^2-xy-yz-zx,\] and substituting $(x,y,z)=(a,b,c)$, if $a,b$ have a common divisor $d\neq 1$, then we see that $c$ can be divided by a prime divisor $d'$ of $d$. Thus, $d'$ is a common divisor of $a,b,c$. Therefore, by Proposition \ref{inductive-solution}, the neighbor $(a',b',c')$ of $(a,b,c)$ on the tree $\mathbb T$ whose maximal number is smaller than $\max\{a,b,c\}$ has the common divisor $d'$. By repeating this operation, we see that $d'$ is a common divisor of $(1,13,3)$. Thus, we must $d'=1$. This is a contradiction. Therefore, we have $d=1$.
\end{proof}

\begin{corollary}\label{cor}
Every number appearing in the tree $\mathbb T$ appears as the maximal number of some positive integer solution to \eqref{Diophantine}.
\end{corollary}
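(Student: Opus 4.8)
The plan is to trace each number appearing in $\mathbb{T}$ back to the vertex at which it is first introduced, and to show that there it is the maximal coordinate. The key structural observation is that, by the generation rule, passing from a parent $(a,b,c)$ to one of its children replaces exactly one of the two non-maximal coordinates by a new value of the form $6bc-a-b-c$, $6ac-a-b-c$, or $6ab-a-b-c$, while the remaining two coordinates are inherited unchanged. Hence every coordinate value of any vertex is either inherited from its parent or is the single new value introduced at that vertex.

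First I would record that every vertex of $\mathbb{T}$ lying at or below $(1,13,3)$ is nonsingular: by Lemma \ref{singular} the only singular solutions are $(1,1,1)$ and $(1,1,3)$, which sit strictly above $(1,13,3)$. Therefore, whenever a child is formed from such a parent $(a,b,c)$ with, say, maximum $a$, Proposition \ref{nonsingular-induction}\,(1)--(2) guarantees that the admissible replacement values $6ac-a-b-c$ and $6ab-a-b-c$ both strictly exceed $a$. Consequently the newly introduced value is itself the maximal coordinate of the child, so any number $n$ introduced at a strict descendant of $(1,13,3)$ occurs as the maximum of that descendant.

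It then remains to treat the finitely many numbers that are not introduced strictly below $(1,13,3)$, namely the coordinate values of the three top vertices. These are $1$, $3$, and $13$, which are respectively the maximal numbers of the solutions $(1,1,1)$, $(1,1,3)$, and $(1,13,3)$, as one checks by inspection. Combining the two cases, every number occurring anywhere in $\mathbb{T}$ is the maximal number of some vertex, and by Theorem \ref{Diophantinetheorem} each such vertex is a positive integer solution to \eqref{Diophantine}, which proves the corollary.

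I expect no serious obstacle, since the statement follows almost immediately from Proposition \ref{nonsingular-induction}; the only point requiring care is the bookkeeping that each number has a well-defined vertex of first appearance at which it is the newly introduced coordinate. This is made precise by induction on the depth of a vertex: a given coordinate value either equals the new value introduced at that depth---and is then the maximum there---or was already present in the parent, in which case one descends to a shallower vertex and repeats, terminating either at the vertex of introduction or at one of the three top vertices $(1,1,1)$, $(1,1,3)$, $(1,13,3)$.
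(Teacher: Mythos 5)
Your proof is correct, but it is organized differently from the paper's. The paper starts from an arbitrary occurrence of $n$ in a solution $(a,b,c)$ with $a>b>c$ and case-splits on the rank of $n$: if $n=a$ it is done; if $n=b$ it passes to the neighbor obtained by swapping out $a$ (Proposition \ref{inductive-solution}), where $n$ becomes maximal; if $n=c$ it walks toward the root until $n$ becomes second largest, the walk being forced to terminate since otherwise one reaches $(1,13,3)$ and $n=1$. You instead trace each number to the vertex where it is first introduced and prove the structural fact that, below $(1,13,3)$, every newly introduced coordinate is the strict maximum of its vertex, via Proposition \ref{nonsingular-induction}(1)--(2); this is exactly the ``each child has a larger maximum than its parent'' fact that the paper deploys in the uniqueness half of Theorem \ref{Diophantinetheorem}, reused here for the corollary. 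Your route is arguably cleaner --- no rank case analysis, and all numbers are handled uniformly --- while the paper's version gets by with the involution and part (3) of Proposition \ref{nonsingular-induction}. One point where you are slightly loose: Lemma \ref{singular} identifies the singular solutions as $(1,1,1)$ and $(1,1,3)$, but to conclude that these do not reappear \emph{below} $(1,13,3)$ (so that every vertex there is nonsingular and Proposition \ref{nonsingular-induction} applies) you must either invoke the uniqueness assertion of Theorem \ref{Diophantinetheorem}, which is legitimately available at this point, or note that your own induction already yields it: a nonsingular parent's children have their new coordinate strictly above the parent's maximum, hence have three distinct entries and are again nonsingular. With that one-line patch your argument is complete.
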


\begin{proof}
Let $n$ be a number appearing in $\mathbb T$. When $n=1$ and $3$, they are the maximal numbers of $(1,1,1)$ and $(1,1,3)$, respectively. We assume $n\geq 13$. We take a positive integer solution $(x,y,z)=(a,b,c)$ containing $n$. We assume that $a> b>c$. If $n=a$, then we are done. If $n=b$, then $n$ is the maximal number in the neighbor of $(a,b,c)$ in the tree $\mathbb T$ obtained by swapping $a$ by Proposition \ref{inductive-solution}. If $n=c$, as we traverse the neighbors with smaller maxima, $n$ becomes the second largest (otherwise, we get to $(1,13,3)$, and we have $n=1$, contradicting the assumption). Therefore, this case is attributed to the $n=b$ case.
\end{proof}

Conjecture \ref{conjecture} is a strong version of Corollary \ref{cor}.

\section{Generalized cluster pattern of positive integer solutions to \eqref{Diophantine}}

In this section, we introduce a generalized cluster pattern and we verify that the tree $\mathbb T$ is realized as a special case of a generalized cluster pattern. Moreover, we discuss the relation between the geometric realization of the cluster pattern and integer solutions to \eqref{Diophantine} via snake graphs.

\subsection{Generalized cluster pattern}

We start with recalling definitions of seed mutations and generalized cluster patterns according to \cites{chsh,nak15}\footnote{In \cites{chsh,nak15}, seeds and their mutations are defined in a version with $y$-variables (coefficients) or $z$-variables, but in this paper, these are not necessary and have been omitted}.
Let $n\in \ZZ_{\geq0}$ and $\Fcal$ a rational function field of $n$ indeterminates.
A \emph{labeled seed} is a pair $(\mathbf{x},B)$, where
\begin{itemize}
\item $\mathbf{x}=(x_1, \dots, x_n)$ is an $n$-tuple of elements of $\mathcal F$ forming a free generating set of $\mathcal F$,
\item $B=(b_{ij})$ is an $n \times n$ integer matrix which is \emph{skew-symmetrizable}, that is, there exists a positive integer diagonal matrix $S$ such that $SB$ is skew-symmetric. Also, we call $S$ a \emph{skew-symmetrizer} of $B$.
\end{itemize}

We say that $\xx$ is a \emph{cluster}, and  we refer to $x_i$ and $B$ as the \emph{cluster variables} and the \emph{exchange matrix}, respectively. Furthermore, we fix $D=\text{diag} (d_1,\dots, d_n)$ (called the \emph{associated diagonal matrix} with $(\xx,B)$), that is a positive integer diagonal matrix of rank $n$.

For an integer $b$, we use the notation $[b]_+=\max(b,0)$. 
Let $(\mathbf{x}, B)$ be a labeled seed, and let $k \in\{1,\dots, n\}$. The \emph{seed mutation $\mu_k$ in direction $k$} transforms $(\mathbf{x},B)$ into another labeled seed $\mu_k(\mathbf{x}, B)=(\mathbf{x'}, B')$ defined as follows:
\begin{itemize}
\item The entries of $B'=(b'_{ij})$ are given by
\begin{align} \label{eq:matrix-mutation}
b'_{ij}=\begin{cases}-b_{ij} &\text{if $i=k$ or $j=k$,} \\
b_{ij}+d_k\left(\left[ b_{ik}\right] _{+}b_{kj}+b_{ik}\left[ -b_{kj}\right]_+\right) &\text{otherwise.}
\end{cases}
\end{align}
\item The cluster variables $\mathbf{x'}=(x'_1, \dots, x'_n)$ are given by
\begin{align}\label{eq:x-mutation}
x'_j=\begin{cases}\dfrac{\left(\mathop{\prod}\limits_{i=1}^{n} x_i^{[-b_{ik}]_+}\right)^{d_k}\mathop{\sum}\limits_{s=0}^{d_k}\left(\mathop{\prod}\limits_{i=1}^{n} x_i^{b_{ik}}\right)^s}{x_k} &\text{if $j=k$,}\\
x_j &\text{otherwise.}
\end{cases}
\end{align}
\end{itemize}

Let $\mathbb{T}_n$ be the \emph{$n$-regular tree} whose edges are labeled by the numbers $1, \dots, n$ such that the $n$ edges emanating from each vertex have different labels. We write
$\begin{xy}(0,0)*+{t}="A",(10,0)*+{t'}="B",\ar@{-}^k"A";"B" \end{xy}$
to indicate that vertices $t,t'\in \mathbb{T}_n$ are joined by an edge labeled by $k$. We fix an arbitrary vertex $t_0\in \TT_n$, which is called the \emph{rooted vertex}.
A \emph{generalized cluster pattern} is an assignment of a labeled seed $\Sigma_t=(\xx_t,B_t)$ to every vertex $t\in \mathbb{T}_n$ such that the labeled seeds $\Sigma_t$ and $\Sigma_{t'}$ assigned to the endpoints of any edge
$\begin{xy}(0,0)*+{t}="A",(10,0)*+{t'}="B",\ar@{-}^k"A";"B" \end{xy}$
are obtained from each other by the seed mutation in direction $k$. We denote by $CP\colon t\mapsto \Sigma_t$ this assigment.

The degree $n$ of the regular tree $\TT_n$ is called the \emph{rank} of a generalized cluster pattern $CP$.
We also denote by $CP_{(\xx,B,D)}$ a generalized cluster pattern with the initial seed $\Sigma_{t_0}=(\xx,B)$ with the associated diagonal matrix $D$.

\begin{remark}
When $D=I_n$ (the identity matrix), a generalized cluster pattern coincides with a \emph{cluster pattern} defined in \cite{fziv}. 
\end{remark}

We will now look at an example that gives a positive integer solution to \eqref{Diophantine}. We set $B=\Delta:=\begin{bmatrix}0&1&-1\\-1&0&1\\1&-1&0\end{bmatrix}$, and $D=2I_3=\text{diag}(2,2,2)$. Then, for any $k\in\{1,2,3\}$, we have 
\begin{align*}
    \mu_k(\pm \Delta)=\mp \Delta.
\end{align*}
Therefore, the mutation of cluster variables are 

\begin{align*}\label{eq:x-mutation2}
x'_j=\begin{cases}\dfrac{x_\ell^2(1+x_\ell^{-1}x_m+x_\ell^{-2}x_m^2)}{x_k}=\dfrac{x_\ell^2+x_\ell x_m+x_m^2}{x_k} &\text{if $j=k$,}\\
x_j &\text{otherwise,}
\end{cases}
\end{align*}
where $\{k,\ell,m\}=\{1,2,3\}$.
This oparation coincides with the operation \[(x_k,x_\ell,x_m)\mapsto (6x_\ell x_m-x_k-x_\ell-x_m,x_\ell,x_m)\]
under the condition \eqref{Diophantine}, that is,
\[(x_k+x_\ell)^2+(x_\ell+x_m)^2+(x_m+x_k)^2=12x_kx_\ell x_m.\]

Therefore, by considering subtree $\TT'_3$ of $\TT_3$ (see Figure \ref{T'3}), we have the following theorem:
\begin{theorem}\label{clusterstructure}
We set $B=\Delta,\xx=(x_1,x_2,x_3)$, and $D=2I_3$. Then, the restriction of the generalized cluster pattern $CP_{(\xx,\Delta,2I_3)}$ with a substitution $x_1=x_2=x_3=1$ to $\mathbb T'_3$ gives the tree $\mathbb T$ (where we ignore exchange matrices in $CP_{(\xx,\Delta,2I_3)}$ and consider only clusters). 
\end{theorem}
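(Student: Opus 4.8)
The plan is to verify the statement in three stages: first, that the cluster-variable mutation in $CP_{(\xx,\Delta,2I_3)}$ always takes one fixed simple form; second, that under the specialization $x_1=x_2=x_3=1$ this form agrees with the operation defining $\TT$; and third, that the combinatorial shape of $\TT'_3$ matches the generation rule of $\TT$.

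For the first stage I would argue by induction on the distance from $t_0$ that every exchange matrix occurring in $CP_{(\xx,\Delta,2I_3)}$ equals $\pm\Delta$: since $\mu_k(\pm\Delta)=\mp\Delta$ for every $k$ (as recorded before the theorem), the sign merely alternates with the parity of the distance to $t_0$. Crucially, I would check that both $\Delta$ and $-\Delta$ yield the \emph{same} cluster-variable exchange relation $x'_k=(x_\ell^2+x_\ell x_m+x_m^2)/x_k$ with $\{k,\ell,m\}=\{1,2,3\}$: each column of $\pm\Delta$ has one entry $+1$, one entry $-1$ and one entry $0$, so the two factors $\prod_i x_i^{[-b_{ik}]_+}$ and $\prod_i x_i^{b_{ik}}$ combine symmetrically and the sign is irrelevant. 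Thus the cluster part of every mutation in the pattern is governed by this single rule.

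For the second stage, the specialized initial cluster $(1,1,1)$ is a positive integer solution of \eqref{Diophantine}, so by Proposition \ref{inductive-solution} every cluster reached by mutation remains a positive integer solution, and at each vertex the constraint $(x_k+x_\ell)^2+(x_\ell+x_m)^2+(x_m+x_k)^2=12x_kx_\ell x_m$ holds. On this locus the exchange relation coincides with $(x_k,x_\ell,x_m)\mapsto(6x_\ell x_m-x_k-x_\ell-x_m,x_\ell,x_m)$, exactly the operation used to build $\TT$. Hence the specialized clusters of $CP_{(\xx,\Delta,2I_3)}$ are precisely the triples produced by the tree operations, with coordinates tracked position by position, since both $\mu_k$ and the generation rule replace a single coordinate in place.

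For the third and decisive stage I would match $\TT'_3$ with $\TT$. The initial segment $(1,1,1)\to(1,1,3)\to(1,13,3)$ I would establish by direct computation: as $(1,1,1)$ and $(1,1,3)$ are singular, all three neighbors of $(1,1,1)$ equal $(1,1,3)$ up to order and the two forward neighbors of $(1,1,3)$ both equal $(1,13,3)$ up to order, so the choice of edges defining this part of $\TT'_3$ is immaterial. From $(1,13,3)$ onward the clusters are nonsingular with distinct entries (Lemma \ref{singular}), and here Proposition \ref{nonsingular-induction} does the essential work: for $(a,b,c)$ with maximum in one position, mutating that position strictly decreases the maximum (part (3)) while mutating each of the other two strictly increases it (parts (1) and (2)). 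Therefore the edge of $\TT'_3$ returning toward $t_0$ is exactly the mutation in the maximal direction, i.e.\ the involution to the parent, and the two forward edges are the mutations in the two non-maximal directions, which are precisely the two children prescribed by rules (i)--(iii); an induction on depth then identifies $\TT'_3$ with $\TT$ vertex by vertex. I expect the main obstacle to be this last bookkeeping: reconciling the uniform degree-$3$ local structure of $\TT_3$ with the max-dependent binary branching of $\TT$, and in particular handling the degenerate top of the tree, where the singular solutions make the max-based rule ambiguous and force one to verify the first two mutation steps by hand rather than through Proposition \ref{nonsingular-induction}.
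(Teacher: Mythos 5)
Your proposal is correct and takes essentially the same approach as the paper: the paper establishes this theorem by exactly your first two stages (the computation $\mu_k(\pm\Delta)=\mp\Delta$, the resulting exchange relation $x'_k=(x_\ell^2+x_\ell x_m+x_m^2)/x_k$, and its identification with $(x_k,x_\ell,x_m)\mapsto(6x_\ell x_m-x_k-x_\ell-x_m,x_\ell,x_m)$ under the Diophantine constraint), and then simply asserts the conclusion ``by considering the subtree $\TT'_3$.'' Your third stage spells out the tree-matching bookkeeping---via Proposition \ref{nonsingular-induction}, Lemma \ref{singular}, and the hand-checked singular vertices at the top---which the paper leaves implicit, relying on the same ingredients it used to prove Theorem \ref{Diophantinetheorem}.
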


\begin{remark}\label{remark}
In Theorem \ref{clusterstructure}, by setting $(B,D)=(2\Delta,I_3)$, we get the Markov tree \eqref{tree2} instead of $\TT$. 
\end{remark}

\begin{figure}[ht]
\caption{The tree $\mathbb T'_3$}
    \label{T'3}
\[\begin{xy}(0,0)*+{t_0}="0",(20,0)*+{t_1}="1",(40,0)*+{t_2}="2",(55,16)*+{t_3}="4",(55,-16)*+{t_4}="5", 
(75,24)*+{t_5\cdots}="6",(75,8)*+{t_6\cdots}="7",(75,-8)*+{t_7\cdots}="8",(75,-24)*+{t_8\cdots}="9", \ar@{-}^{3}"0";"1"\ar@{-}^{3}"2";"4"\ar@{-}_{1}"2";"5"\ar@{-}^{2}"4";"6"\ar@{-}_{1}"4";"7"\ar@{-}^{3}"5";"8"\ar@{-}_{2}"5";"9"\ar@{-}^{2}"1";"2"
\end{xy}.\]
\end{figure}
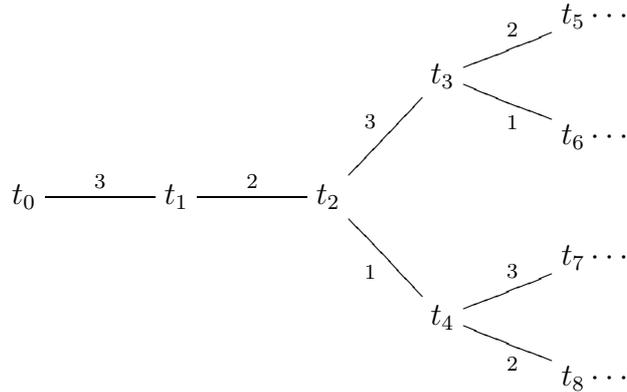

In the Markov Diophantine equation \eqref{Diophantine2}, numbers of positive integer solutions containing $1$ are all Fibonacci numbers. We give a corollary on positive integer solutions to \eqref{Diophantine} containing 1.

Under the setting $B=\Delta$ and $D=2I_3$, we consider banning a mutation in direction 1. Substituting 1 for the invariant first component of clusters, the mutation rule is 
\begin{align*}\label{eq:x-mutation2}
x'_j=\begin{cases}\dfrac{x_\ell^2+x_\ell+1}{x_k} &\text{if $j=k$,}\\
x_j &\text{otherwise,}
\end{cases}
\end{align*}
where $\{k,\ell\}=\{2,3\}$. This equation coincides with the formula in \cite{oeis}*{A101368}. Therefore, we have the following corollary.

\begin{corollary}
Let $(1,b,c)$ be a positive integer solution to \eqref{Diophantine}. Then, $b$ divides $1+c+c^2$ and $c$ divides $1+b+b^2$.
\end{corollary}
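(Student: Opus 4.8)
The plan is to leverage the generalized cluster structure established in Theorem~\ref{clusterstructure}. Under the setting $B=\Delta$ and $D=2I_3$, a solution $(1,b,c)$ arises from a cluster with first component equal to $1$, and we ban the mutation in direction $1$ so that the first coordinate stays fixed at $1$ throughout. The mutations in directions $2$ and $3$ then produce the recurrence displayed just above the statement, namely $x'_k=(x_\ell^2+x_\ell+1)/x_k$ with $\{k,\ell\}=\{2,3\}$. The point is that every positive integer solution of the form $(1,b,c)$ is obtained from the base solution by a sequence of such mutations (one traverses the tree $\mathbb T$ toward smaller maxima until reaching a triple containing two $1$'s, and along this path the first coordinate can be kept equal to $1$).

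First I would record the key divisibility identity built into the mutation formula: if $(1,b,c)$ is a cluster appearing in the pattern, then mutating in direction $3$ replaces $c$ by $c'=(b^2+b+1)/b$\dots{} but to match the asymmetry of the statement I would instead observe directly from the mutation rule that the exchange produces an integer, so $b\mid 1+c+c^2$ and $c\mid 1+b+b^2$ follow from the requirement that the mutated cluster variable $(c^2+c+1)/b$ (respectively $(b^2+b+1)/c$) be a positive integer. Concretely, mutating $(1,b,c)$ in direction $2$ yields $x_2'=(x_3^2+x_3+1)/x_2=(c^2+c+1)/b$, and since this is again a cluster variable of a positive integer solution it must be a positive integer, giving $b\mid 1+c+c^2$; symmetrically, mutating in direction $3$ gives $c\mid 1+b+b^2$.

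The cleanest route is therefore to argue that the claimed divisibilities are exactly the integrality of the neighbouring cluster variables, and that integrality is guaranteed because Theorem~\ref{Diophantinetheorem} (together with Proposition~\ref{inductive-solution}) tells us the mutated triple is again a positive integer solution to \eqref{Diophantine}. The hard part will be verifying that restricting to solutions of the shape $(1,b,c)$ is consistent with banning direction~$1$, i.e. that the family of such solutions is genuinely closed under the remaining two mutations and connected to the initial seed within $\mathbb T$; once that is clear, the divisibility is immediate. One should also double-check the boundary case where $b=c=1$ or the triple is singular, where the formula still produces an integer and no contradiction arises.

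I would close by noting that the identification with the OEIS sequence \cite{oeis}*{A101368} provides an independent confirmation: that sequence is generated by precisely the recurrence $a_{n+1}=(a_n^2+a_n+1)/a_{n-1}$, whose integrality is the classical Laurent-phenomenon-type statement, and each consecutive pair $(a_{n-1},a_n)$ plays the role of $(b,c)$. Thus the corollary is the specialization of the Laurent/positivity property of the generalized cluster pattern to the one-parameter subfamily obtained by freezing the first coordinate at $1$.
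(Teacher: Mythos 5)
Your core argument is correct, and it is essentially the paper's own: the paper obtains this corollary precisely by freezing the first coordinate at $1$, noting that the mutation rule in directions $2,3$ becomes $x'_k=(x_\ell^2+x_\ell+1)/x_k$, and that the resulting values are entries of positive integer solutions. However, the step you single out as ``the hard part'' --- verifying that the family of solutions $(1,b,c)$ is closed under the remaining two mutations and connected to the initial seed inside $\mathbb{T}$ --- is not needed at all, and leaving it unresolved makes your write-up look incomplete when it need not be. The divisibility is a purely \emph{local} statement about a single solution: if $(1,b,c)$ is any positive integer solution to \eqref{Diophantine}, then the analogue of identity \eqref{a^2+b^2+ab} (which the proofs of Propositions \ref{inductive-solution} and \ref{nonsingular-induction} establish using only the fact that $(a,b,c)$ solves the equation, nothing about $\mathbb{T}$) reads
\[
6c-1-b-c=\frac{1+c+c^2}{b},
\qquad
6b-1-b-c=\frac{1+b+b^2}{c},
\]
and the left-hand sides are manifestly integers. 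Hence $b\mid 1+c+c^2$ and $c\mid 1+b+b^2$ immediately, with no appeal to Theorem \ref{Diophantinetheorem}, to membership or connectivity in $\mathbb{T}$, or to any Laurent-phenomenon-type integrality: the generalized cluster pattern explains \emph{why} these identities arise (they are the exchange relations of $CP_{(\xx,\Delta,2I_3)}$ specialized at $x_1=x_2=x_3=1$), but it plays no logical role in the divisibility itself. Two small corrections: mutating in direction $3$ replaces $c$ by $(b^2+b+1)/c$, not $(b^2+b+1)/b$ as you wrote; and the OEIS identification is, as you say, only a consistency check, so it should not be cited as carrying any part of the proof.
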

\subsection{Realization of integer solutions by perfect matching of snake graph} In \cite{chsh}, Chekhov and Shapiro gave a geometric realization of generalized cluster patterns when the numerator of the fraction in \eqref{eq:x-mutation} is trinomial. Here, we will focus only on the geometric realization corresponding to the cluster pattern giving the solutions to the equation \eqref{Diophantine}. For the general discussion, see \cites{chsh, feshtu}, etc.

An \emph{orbifold} is defined by the triplet $\mathcal O=(S,M,Q)$, where $S$ is a borderd Riemann surface, $M$ is a finite set of marked points in $S$, and $Q$ is a finite set of special points in $S$, called \emph{orbifold points}. Some Marked points belong to $\partial S$ and some belong to the interior of $S$. The latter are called \emph{punctures}. We assume $M\cap Q=\emptyset$ and $Q$ is included in the interior of $S$. Moreover, each orbifold point has an associated positive integer $p\geq 2$, called an \emph{order}. We define an \emph{arc} in $\mathcal O$ as a curve in $S$ with endpoints in $M\cup Q$ considered up to relative isotopy of $S\setminus M\cup Q$ modulo endpoints such that
\begin{itemize}
    \item both endpoints of $\gamma$ belong to $M$, or one endpoint belongs to $M$ and the other to $Q$,
    \item $\gamma$ has no intersections (however, it is possible that the endpoints are the same point),
    \item except for the endpoints, $\gamma$ and $M \cup Q \cup \partial S$ are disjoint,
    \item if $\gamma$ cuts out a monogon, then this monogon contains either a point of $M$ or at least two points of $Q$,
    \item $\gamma$ is not homotopic to a boundary segment of $S$.
\end{itemize}
An arc $\gamma$ is said to be a \emph{pending arc} if one of its endpoints belongs to $M$ and the other belongs to $Q$. Otherwise, that is, both endpoint of $\gamma$ belong to $M$, $\gamma$ is said to be a \emph{ordinary arc}. 

Next, we consider triangulating of orbifolds by arcs. Two arcs of $\mathcal O$ is \emph{compatible} if they satisfy the following conditions:
\begin{itemize}
    \item they do not intersect in the interior of $S$,
    \item if both $\gamma$ and $\gamma'$ are pending arcs, then the ends of $\gamma$ and $\gamma'$ that are orbifold points do not coincide. 
\end{itemize}

We define a \emph{triangulation} of $\mathcal O$ as the maximal set of pairwise compatible arcs in $\mathcal O$. Moreover, the operation of obtaining a new triangulation by replacing one edge of a triangulation with another edge is called a \emph{flip}. The triangulation obtained by flipping an arc is uniquely determined, and flipping the newly obtained arc again returns to the original triangulation. Let $T=(l_1,\dots,l_n)$ be a triangulation whose elements are labeled $1,\dots, n$. Then, we denote by $f_k(T)$ the flipped triangulation of $T$ obtained by replacing $l_k$.

\begin{remark}
The definition of order in this paper is based on \cite{bake} and is different from that of \cite{feshtu}. See also \cite{bake}*{Remark 3.4}.
\end{remark}


In this paper, we use an orbifold $\mathcal O_3$ consists of a sphere with one puncture and three orbifold points whose orders are all 3 and their triangulation. By moving orbifold points, we can assume that any triangulation of $\mathcal O_3$ is a diagram in Figure \ref{figure2}. 

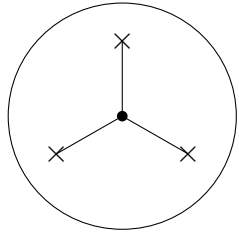
\begin{figure}[ht]
    \centering
    \caption{Sphere with one marked point and three orbifold points}
    \label{figure2}
\vspace{2mm}    
\begin{tikzpicture}
 \coordinate (0) at (0,0);
 \draw(0,0) circle (1.5);
 \node at (90:1) {$\times$};
\node at (-30:1) {$\times$};
\node at (210:1) {$\times$};
 \fill (0) circle (0.7mm); 
\draw(0,0) -- (90:1);
\draw(0,0) -- (-30:1);
\draw(0,0) -- (210:1);
\end{tikzpicture}
\end{figure}
We label these three pending arcs $1,2,3$. If the arc labeled 2 is exchanged with a flip, the resulting new triangulation is shown in Figure \ref{figure3}.
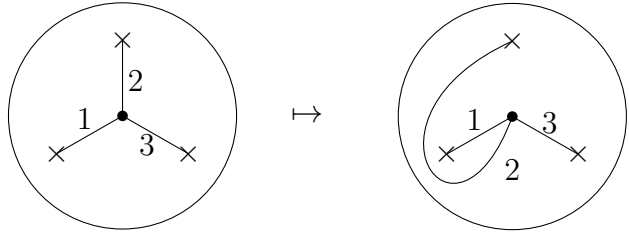
\begin{figure}[ht]
    \centering
    \caption{Flip of arc labeled 2}
    \label{figure3}
\vspace{2mm}    
\begin{tikzpicture}
 \coordinate (0) at (0,0);
 \draw(0,0) circle (1.5);
 \node at (90:1) {$\times$};
\node at (-30:1) {$\times$};
\node at (210:1) {$\times$};
\node at (185:0.5) {$1$};
\node at (70:0.5) {$2$};
\node at (-50:0.5) {$3$};
 \fill (0) circle (0.7mm); 
\draw(0,0) -- (90:1);
\draw(0,0) -- (-30:1);
\draw(0,0) -- (210:1);
\end{tikzpicture}
\hspace{3mm}
\begin{tikzpicture}[baseline=-1.5cm]
\node at (0:0) {$\mapsto$};
\end{tikzpicture}
\begin{tikzpicture}[baseline=-1.5cm]
 \coordinate (0) at (0,0);
 \draw(0,0) circle (1.5);
 \node at (90:1) {$\times$};
\node at (-30:1) {$\times$};
\node at (210:1) {$\times$};
 \fill (0) circle (0.7mm); 
\draw(0,0) -- (-30:1);
\draw(0,0) -- (210:1);
\node at (185:0.5) {$1$};
\node at (-90:0.7) {$2$};
\node at (-10:0.5) {$3$};
\draw(0:0) .. controls (250:2.2) and (180:2.2) .. (90:1);
\end{tikzpicture}
\end{figure}
We consider constructing \emph{snake graphs} from arcs of $\mathcal O_3$. Snake graphs of arcs in orbifolds are introduced by Banaian and Kelley in \cite{bake}\footnote{In \cite{bake}, snake graphs are introduced for a class of orbifolds without punctures, but exceptionally for closed surfaces with only one puncture where no tagged arc appears, it is possible to construct snake graphs using the same procedure as in this paper.}. In this paper, we present a method of constructing snake graphs of arcs in $\mathcal O_3$ by using ``unfolding". This method is given by Yurikusa in \cites{yur-expansion-A,y} in the case of surfaces without orbifold point, and we generalize it. Due to constructing the snake graph, all pending arcs are replaced by monogons surrounding the orbifold point (Figure \ref{figure4}). 
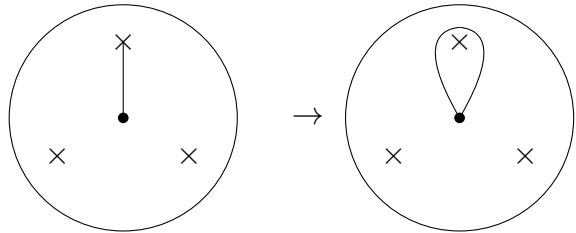
\begin{figure}[ht]
    \centering
    \caption{Replacement of pending arc}
    \label{figure4}
\vspace{2mm}    
\begin{tikzpicture}
 \coordinate (0) at (0,0);
 \draw(0,0) circle (1.5);
 \node at (90:1) {$\times$};
\node at (-30:1) {$\times$};
\node at (210:1) {$\times$};
 \fill (0) circle (0.7mm); 
\draw(0,0) -- (90:1);
\end{tikzpicture}
\hspace{3mm}
\begin{tikzpicture}[baseline=-1.5cm]
\node at (0:0) {$\to$};
\end{tikzpicture}
\begin{tikzpicture}
 \coordinate (0) at (0,0);
 \draw(0,0) circle (1.5);
\node at (90:1) {$\times$};
\node at (-30:1) {$\times$};
\node at (210:1) {$\times$};
 \fill (0) circle (0.7mm); 
\draw(0,0) .. controls (120:1.2) and (100:1.2) .. (90:1.2);
\draw(0,0) .. controls (60:1.2) and (80:1.2) .. (90:1.2);
\end{tikzpicture}
\end{figure}
First, we set a initial triangulation $T_0=(l_1,l_2,l_3)$ of $\mathcal{O}_3$. We can assume that the triangulation $T_0$ is the one shown in Figure \ref{figure5}.
\begin{figure}[ht]
    \centering
    \caption{Triangulation $T_0$}
    \label{figure5}
\begin{tikzpicture}
 \coordinate (0) at (0,0);
 \draw(0,0) circle (1.5);
 \node at (90:1) {$\times$};
\node at (-30:1) {$\times$};
\node at (210:1) {$\times$};
\node at (180:1) {$1$};
\node at (60:1) {$2$};
\node at (-60:1) {$3$};
 \fill (0) circle (0.7mm); 
\draw(0,0) .. controls (120:1.2) and (100:1.2) .. (90:1.2);
\draw(0,0) .. controls (60:1.2) and (80:1.2) .. (90:1.2);
\draw(0,0) .. controls (0:1.2) and (-20:1.2) .. (-30:1.2);
\draw(0,0) .. controls (-60:1.2) and (-40:1.2) .. (-30:1.2);
\draw(0,0) .. controls (180:1.2) and (200:1.2) .. (210:1.2);
\draw(0,0) .. controls (240:1.2) and (220:1.2) .. (210:1.2);
\end{tikzpicture}
\end{figure}
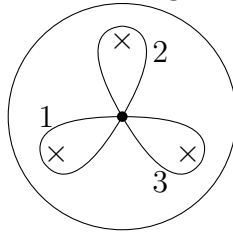
Let $\gamma$ be an arc in $\mathcal O_3$. We note that $\gamma$ is a pending arc, and in current situation, a monogon surrounding the orbifold point (from now on, arcs will refer to these replaced monogons). We set an orientation of $\gamma$. Since $T_0$ is a triangulation, if $\gamma$ is neither $l_1,l_2$ nor $l_3$, then $\gamma$ intersects one of $l_1,l_2$ or $l_3$ at least once except for its endpoints. Here, the number of intersections with $l_1$, $l_2$, and $l_3$ of the $\gamma$ should be minimized. We construct the \emph{pre-snake graph} $\tilde{G}_{T_0,\gamma}$ by the following way:
\begin{itemize}
    \item If $\gamma$ is $l_1,l_2$ or $l_3$, then $\tilde{G}_{T_0,\gamma}$ is an edge labeled $1,2$, or $3$, respectively, 
    \item If $\gamma$ is neither $l_1,l_2$ nor $l_3$, $\tilde{G}_{T_0,\gamma}$ is a figure in which the triangles through which $\gamma$ passes are pasted together. We remark that a monogon enclosed a orbifold point is regarded as a triangle in which all sides are curves that make up the monogon. When $\gamma$ passes through the monogon clockwise, the next triangle is pasted on the right side of the triangle corresponding to the monogon, and when passing through the monogon counterclockwise, the next triangle is pasted to the left side of the triangle corresponding to the monogon. See Figure \ref{figure6}. We label each edge of $\tilde{G}_{T_0,\gamma}$ with labels 1, 2, and 3 corresponding to $l_1$, $l_2$, and $l_3$.
\end{itemize}
\begin{figure}[ht]
    \centering
    \caption{Rules for $\gamma$ passing through monogon $l_i$}
    \label{figure6}
\vspace{2mm}    
\begin{tikzpicture}
 \coordinate (0) at (0,0);
\node at (90:2) {$\times$};
 \fill (0) circle (0.7mm); 
\draw(0,0) .. controls (120:2.4) and (100:2.4) .. (90:2.4);
\draw(-1,1)--(1,1);
\node at (0,1) {$>$};
\node at (1.2,1) {$\gamma$};
\draw(0,0) .. controls (60:2.4) and (80:2.4) .. (90:2.4);
\node at (115:2) {$l_i$};
\end{tikzpicture}
in $\mathcal O_3$
\hspace{0.5cm}
\begin{tikzpicture}[baseline=-1.5cm]
\node at (0:0) {$\leftrightarrow$};
\end{tikzpicture}
\hspace{1cm}
\begin{tikzpicture}[baseline=-2cm]
\coordinate (1) at (0,0);
\coordinate (2) at (1.5,0);
\coordinate (3) at (0,-1.5);
\coordinate (4) at (1.5,-1.5);
\draw(1)--(2);
\draw(2)--(3);
\draw(3)--(1);
\draw(4)--(2);
\draw(4)--(3);
\node at (0.75,0.2) {$i$};
\node at (-0.2,-0.75) {$i$};
\node at (1.1,-0.6) {$i$};
\draw[dashed](-0.5,-0.5) .. controls (0.5,-0.5) and (0.5,-0.5) .. (1,-1);
\node at (0.25,-0.52) {$>$};
\end{tikzpicture}
\quad in $\tilde{G}_{T_0,\gamma}$
\end{figure}
\begin{example}\label{gamma-ex}
When the arc $\gamma$ is given as in the left-hand side of Figure \ref{figure7}, $\tilde{G}_{T_0,\gamma}$ is given as in the figure on the right-hand side of Figure \ref{figure7}.
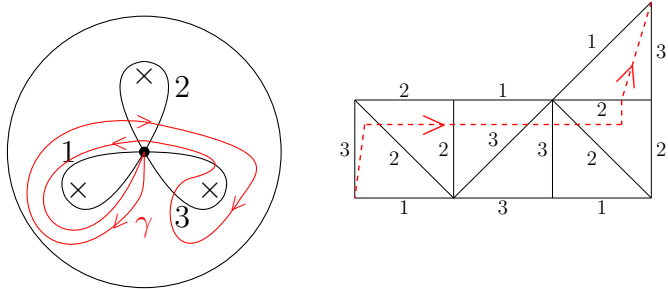
\begin{figure}[ht]
    \centering
    \caption{Arc $\gamma$ and $\tilde{G}_{T_0,\gamma}$}
    \label{figure7}
\begin{tikzpicture}[baseline=2cm]
 \coordinate (0) at (0,0);
 \draw(0,0) circle (1.8);
 \node at (90:1) {$\times$};
\node at (-30:1) {$\times$};
\node at (210:1) {$\times$};
\node at (180:1) {$1$};
\node at (60:1) {$2$};
\node at (-60:1) {$3$};
\node [red]at (-90:1) {$\gamma$};
 \fill (0) circle (0.7mm); 
\draw(0,0) .. controls (120:1.2) and (100:1.2) .. (90:1.2);
\draw(0,0) .. controls (60:1.2) and (80:1.2) .. (90:1.2);
\draw(0,0) .. controls (0:1.2) and (-20:1.2) .. (-30:1.2);
\draw(0,0) .. controls (-60:1.2) and (-40:1.2) .. (-30:1.2);
\draw(0,0) .. controls (180:1.2) and (200:1.2) .. (210:1.2);
\draw(0,0) .. controls (240:1.2) and (220:1.2) .. (210:1.2);
\draw[red](0:0) .. controls (250:2.5) and (180:2.5) .. (90:0.15);
\draw[red](-0.2,-0.8) .. controls (235:2.8) and (165:2.5) .. (90:0.4);
\draw[red](-0.2,-0.8) .. controls (0,-0.55) and (0,-0.55) .. (0,0);
\draw[red](90:0.15) .. controls (0:1) and (-10:1.2) .. (-30:0.7);
\draw[red](90:0.4) .. controls (0:2) and (-10:1.5) .. (-30:1.4);
\draw[red](-30:0.7) .. controls (-80:0.5) and (-80:2) .. (-30:1.4);
\node [red]at (-110:1) {\rotatebox{-130}{\tiny\text{$>$}}};
\node [red]at (90:0.4) {\rotatebox{0}{\tiny\text{$>$}}};
\node [red]at (-30:1.4) {\rotatebox{-130}{\tiny\text{$>$}}};
\node [red]at (160:0.35) {\rotatebox{-180}{\tiny\text{$>$}}};
\end{tikzpicture}
\hspace{2mm}
\rotatebox{-90}{\scalebox{0.65}{\begin{tikzpicture}
\coordinate(1) at (0,0){}; 
\coordinate(2) at (0,-2){}; 
\coordinate(3) at (0,-4){}; 
\coordinate(5) at (2,0){}; 
\coordinate(6) at (2,-2){}; 
\coordinate(7) at (2,-4){}; 
\coordinate(8) at (0,-6){}; 
\coordinate(9) at (2,-6){}; 
\coordinate(10) at (-2,0){}; 
\coordinate(11) at (-2,-2){}; 
\draw(1) to (5);
\draw(1) to (3);
\draw(3) to (7);
\draw(2) to (6);
\draw(5) to (2);
\draw(2) to (7);
\draw(5) to (9);
\draw(3) to (8);
\draw(8) to (9);
\draw(8) to (7);
\draw(2) to (10);
\draw(1) to (10);
\node at (1,-6.2){\rotatebox{90}{3}};
\node at (2.2,-5){\rotatebox{90}{1}};
\node at (1.2,-5.2){\rotatebox{90}{2}};
\node at (-0.2,-5){\rotatebox{90}{2}};
\node at (1,-4.2){\rotatebox{90}{2}};
\node at (0.8,-3.2){\rotatebox{90}{3}};
\node at (-0.2,-3){\rotatebox{90}{1}};
\node at (2.2,-3){\rotatebox{90}{3}};
\node at (1,-2.2){\rotatebox{90}{3}};
\node at (1.2,-1.2){\rotatebox{90}{2}};
\node at (2.2,-1){\rotatebox{90}{1}};
\node at (1,0.2){\rotatebox{90}{2}};
\node at (0.2,-1){\rotatebox{90}{2}};
\node at (-1,0.2){\rotatebox{90}{3}};
\node at (-1.2,-1.2){\rotatebox{90}{1}};
\draw[dashed,red,thick](-2,0)--(0,-0.6);
\draw[dashed,red,thick](0,-0.6)--(0.5,-0.6);
\draw[dashed,red,thick](0.5,-0.6)--(0.5,-5.8);
\draw[dashed,red,thick](0.5,-5.8)--(2,-6);
\node [red]at (0.5,-4.4) {\rotatebox{90}{\LARGE \text{$>$}}};
\node [red]at (-0.5,-0.45) {\rotatebox{160}{\LARGE \text{$>$}}};
\end{tikzpicture}}}
\end{figure}
\end{example}
In the pre-snake graph of $\mathcal O_3$, two types of triangles, a triangle labeled with the same number and a triangle labeled 1,2,3 counterclockwise, appear alternately.
Next, we define the \emph{snake graph} $G_{T_0,\gamma}$ as a graph from a pre-snake graph $\tilde{G}_{T_0,\gamma}$ by the following way:
\begin{itemize}
    \item [(1)] We unfold each triangle of $\tilde{G}_{T_0,\gamma}$, except for first and last ones, along its boundary side (see Figure \ref{figure8}),
\item [(2)] we shape the figure obtained in (1) so that each of the two triangles forms a square tile,
\item[(3)] we remove the diagonal from each square tile of the figure formed in (2).
\end{itemize}
We note that if $\gamma$ is either $l_1$, $l_2$, or $l_3$, then we have $G_{T_0,\gamma}=\tilde{G}_{T_0,\gamma}$.
 \begin{figure}[ht]
   \caption{Unfolding triangle, where $a$ is boundary segment, while $b$ and $c$ are not}
   \label{figure8}
   \vspace{2mm}
\begin{tikzpicture}
\coordinate(1) at (0,0){}; 
\coordinate(2) at (1.5,0){}; 
\coordinate(3) at (3,0){}; 
\coordinate(5) at (0,-1){}; 
\coordinate(6) at (1,-1){}; 
\coordinate(7) at (2,-1){}; 
\coordinate(8) at (3,-1){}; 
\draw(1) to (3);
\draw(5) to (6);
\draw[red](6) to (7);
\draw(7) to (8);
\draw(2) to (6);
\draw(2) to (7);
\node[red] at (1.5,-1.2){$a$};
\node at (1,-0.5){$b$};
\node at (2,-0.5){$c$};
\end{tikzpicture}
\begin{tikzpicture}[baseline=-9mm]
\node at (0,0){$\xrightarrow[\text{along }a]{\text{unfolding}}$};
\end{tikzpicture}
\begin{tikzpicture}
\coordinate(1) at (0,0){}; 
\coordinate(2) at (1.5,0){}; 
\coordinate(3) at (3,0){}; 
\coordinate(5) at (0,-1){}; 
\coordinate(6) at (1,-1){}; 
\coordinate(7) at (2,-1){}; 
\coordinate(8) at (3,-1){}; 
\coordinate(9) at (0,-2){}; 
\coordinate(10) at (1.5,-2){}; 
\coordinate(11) at (3,-2){}; 
\draw(1) to (2);
\draw(5) to (6);
\draw[red](6) to (7);
\draw(7) to (8);
\draw(2) to (6);
\draw(2) to (7);
\draw(10)to (11);
\draw(10)to(6);
\draw(10)to (7);
\node[red] at (1.5,-1.2){$a$};
\node at (1,-0.5){$b$};
\node at (2,-0.5){$c$};
\node at (1,-1.5){$b$};
\node at (2,-1.5){$c$};
\end{tikzpicture}
\end{figure}
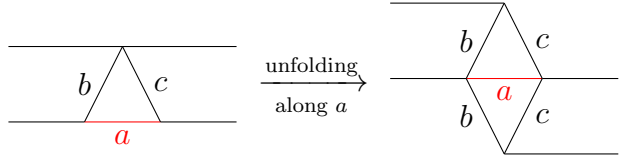
\begin{example}\label{ex1}
We consider $\gamma$ in Example \ref{gamma-ex}. Then, the snake graph $G_{T_0,\gamma}$ is given as in the last graph in Figure \ref{figure9}.
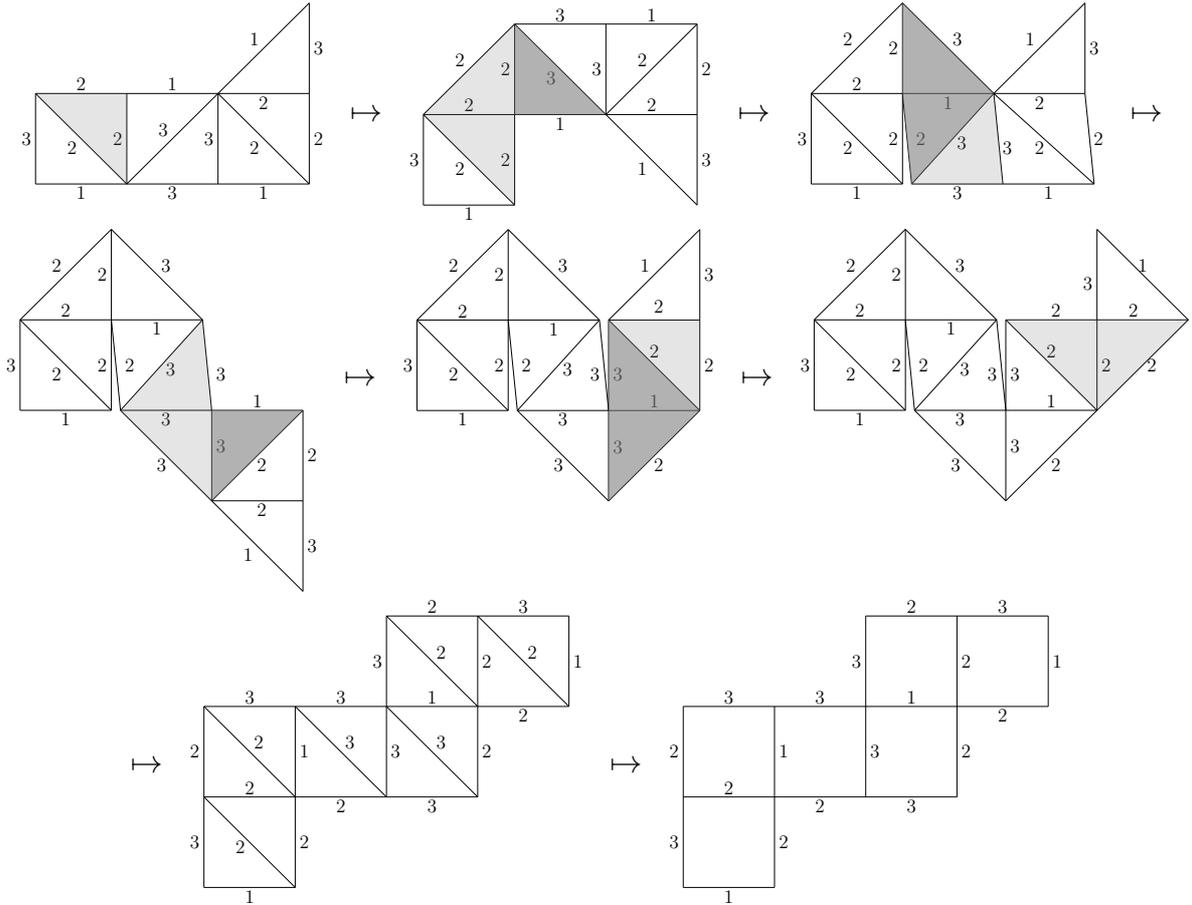
\begin{figure}[ht]
\centering
    \caption{From $\tilde{G}_{T_0,\gamma}$ to $G_{T_0,\gamma}$}
    \label{figure9}
\rotatebox{-90}{\scalebox{0.6}{\begin{tikzpicture}
\coordinate(1) at (0,0){}; 
\coordinate(2) at (0,-2){}; 
\coordinate(3) at (0,-4){}; 
\coordinate(5) at (2,0){}; 
\coordinate(6) at (2,-2){}; 
\coordinate(7) at (2,-4){}; 
\coordinate(8) at (0,-6){}; 
\coordinate(9) at (2,-6){}; 
\coordinate(10) at (-2,0){}; 
\coordinate(11) at (-2,-2){}; 
\draw(1) to (5);
\draw(1) to (3);
\draw(3) to (7);
\draw(2) to (6);
\draw(5) to (2);
\draw(2) to (7);
\draw(5) to (9);
\draw(3) to (8);
\draw(8) to (9);
\draw(8) to (7);
\draw(2) to (10);
\draw(1) to (10);
\node at (1,-6.2){\rotatebox{90}{3}};
\node at (2.2,-5){\rotatebox{90}{1}};
\node at (1.2,-5.2){\rotatebox{90}{2}};
\node at (-0.2,-5){\rotatebox{90}{2}};
\node at (1,-4.2){\rotatebox{90}{2}};
\node at (0.8,-3.2){\rotatebox{90}{3}};
\node at (-0.2,-3){\rotatebox{90}{1}};
\node at (2.2,-3){\rotatebox{90}{3}};
\node at (1,-2.2){\rotatebox{90}{3}};
\node at (1.2,-1.2){\rotatebox{90}{2}};
\node at (2.2,-1){\rotatebox{90}{1}};
\node at (1,0.2){\rotatebox{90}{2}};
\node at (0.2,-1){\rotatebox{90}{2}};
\node at (-1,0.2){\rotatebox{90}{3}};
\node at (-1.2,-1.2){\rotatebox{90}{1}};
\fill [gray,opacity=.2] (3)--(8)--(7)--(3);
\end{tikzpicture}}}
\begin{tikzpicture}[baseline=15mm]
\node at (0,0){$\mapsto$};
\end{tikzpicture}
\rotatebox{-90}{\scalebox{0.6}{\begin{tikzpicture}
\coordinate(1) at (0,0){}; 
\coordinate(2) at (0,-2){}; 
\coordinate(3) at (0,-4){}; 
\coordinate(5) at (-2,0){}; 
\coordinate(6) at (-2,-2){}; 
\coordinate(7) at (2,-4){}; 
\coordinate(8) at (0,-6){}; 
\coordinate(9) at (2,-6){}; 
\coordinate(10) at (-2,0){}; 
\coordinate(11) at (-2,-2){}; 
\coordinate(12) at (-2,-6){};
\coordinate(13) at (-2,-4){}; 
\coordinate(14) at (2,0){}; 
\draw(1) to (3);
\draw(3) to (7);
\draw(2) to (6);
\draw(5) to (13);
\draw(7) to (9);
\draw(3) to (8);
\draw(8) to (9);
\draw(8) to (7);
\draw(2) to (10);
\draw(1) to (10);
\draw(13) to (3);
\draw(13) to (2);
\draw(8) to (13);
\draw(14) to (1);
\draw(14) to (2);
\node at (1,-6.2){\rotatebox{90}{3}};
\node at (2.2,-5){\rotatebox{90}{1}};
\node at (1.2,-5.2){\rotatebox{90}{2}};
\node at (-1.2,-5.2){\rotatebox{90}{2}};
\node at (-0.2,-5){\rotatebox{90}{2}};
\node at (1,-4.2){\rotatebox{90}{2}};
\node at (-1,-4.2){\rotatebox{90}{2}};
\node at (-0.8,-3.2){\rotatebox{90}{3}};
\node at (0.2,-3){\rotatebox{90}{1}};
\node at (-2.2,-3){\rotatebox{90}{3}};
\node at (-1,-2.2){\rotatebox{90}{3}};
\node at (-1.2,-1.2){\rotatebox{90}{2}};
\node at (-2.2,-1){\rotatebox{90}{1}};
\node at (-1,0.2){\rotatebox{90}{2}};
\node at (-0.2,-1){\rotatebox{90}{2}};
\node at (1,0.2){\rotatebox{90}{3}};
\node at (1.2,-1.2){\rotatebox{90}{1}};
\fill [gray,opacity=.2] (13)--(8)--(7)--(13);
\fill [gray,opacity=.6] (13)--(2)--(3)--(13);
\end{tikzpicture}}}
\begin{tikzpicture}[baseline=15mm]
\node at (0,0){$\mapsto$};
\end{tikzpicture}
\rotatebox{-90}{\scalebox{0.6}{\begin{tikzpicture}
\coordinate(1) at (0,0){}; 
\coordinate(2) at (0,-2){}; 
\coordinate(3) at (0,-4){}; 
\coordinate(5) at (-2,0){}; 
\coordinate(6) at (-2,-2){}; 
\coordinate(7) at (2,-4){}; 
\coordinate(8) at (0,-6){}; 
\coordinate(9) at (2,-6){}; 
\coordinate(10) at (-2,0){}; 
\coordinate(11) at (-2,-2){}; 
\coordinate(12) at (-2,-6){};
\coordinate(13) at (-2,-4){}; 
\coordinate(14) at (2,0.2){}; 
\coordinate(15) at (2,-3.8){};
\coordinate(16) at (2,-1.8){};
\draw(1) to (3);
\draw(3) to (7);
\draw(7) to (9);
\draw(3) to (8);
\draw(8) to (9);
\draw(8) to (7);
\draw(2) to (10);
\draw(1) to (10);
\draw(13) to (3);
\draw(13) to (2);
\draw(8) to (13);
\draw(14) to (1);
\draw(14) to (2);
\draw(3) to (15);
\draw(2) to (16);
\draw(14) to (15);
\draw(2) to (15);
\node at (1,-6.2){\rotatebox{90}{3}};
\node at (2.2,-5){\rotatebox{90}{1}};
\node at (1.2,-5.2){\rotatebox{90}{2}};
\node at (-1.2,-5.2){\rotatebox{90}{2}};
\node at (-0.2,-5){\rotatebox{90}{2}};
\node at (1,-4.2){\rotatebox{90}{2}};
\node at (-1,-4.2){\rotatebox{90}{2}};
\node at (-1.2,-2.8){\rotatebox{90}{3}};
\node at (0.2,-3){\rotatebox{90}{1}};
\node at (2.2,-2.8){\rotatebox{90}{3}};
\node at (1.2,-1.7){\rotatebox{90}{3}};
\node at (1.2,-1){\rotatebox{90}{2}};
\node at (2.2,-0.8){\rotatebox{90}{1}};
\node at (1,0.3){\rotatebox{90}{2}};
\node at (0.2,-1){\rotatebox{90}{2}};
\node at (-1,0.2){\rotatebox{90}{3}};
\node at (-1.2,-1.2){\rotatebox{90}{1}};
\node at (1,-3.6){\rotatebox{90}{2}};
\node at (1.1,-2.7){\rotatebox{90}{3}};
\fill [gray,opacity=.6] (13)--(2)--(3)--(13);
\fill [gray,opacity=.6] (15)--(2)--(3)--(15);
\fill [gray,opacity=.2] (2)--(16)--(15)--(2);
\end{tikzpicture}}}
\begin{tikzpicture}[baseline=15mm]
\node at (0,0){$\mapsto$};
\end{tikzpicture}
\rotatebox{-90}{\scalebox{0.6}{\begin{tikzpicture}
\coordinate(1) at (0,0){}; 
\coordinate(2) at (0,-2){}; 
\coordinate(3) at (0,-4){}; 
\coordinate(5) at (-2,0){}; 
\coordinate(6) at (-2,-2){}; 
\coordinate(7) at (2,-4){}; 
\coordinate(8) at (0,-6){}; 
\coordinate(9) at (2,-6){}; 
\coordinate(10) at (-2,0){}; 
\coordinate(11) at (-2,-2){}; 
\coordinate(12) at (-2,-6){};
\coordinate(13) at (-2,-4){}; 
\coordinate(14) at (2,0.2){}; 
\coordinate(15) at (2,-3.8){};
\coordinate(16) at (2,-1.8){};
\coordinate(17) at (4,-1.8){};
\coordinate(18) at (4,0.2){};
\coordinate(19) at (6,0.2){};
\draw(2) to (3);
\draw(3) to (7);
\draw(7) to (9);
\draw(3) to (8);
\draw(8) to (9);
\draw(8) to (7);
\draw(13) to (3);
\draw(13) to (2);
\draw(8) to (13);
\draw(3) to (15);
\draw(2) to (16);
\draw(14) to (15);
\draw(2) to (15);
\draw(15) to (17);
\draw(16) to (17);
\draw(17) to (18);
\draw(14) to (18);
\draw(14) to (17);
\draw(18) to (19);
\draw(17) to (19);
\node at (1,-6.2){\rotatebox{90}{3}};
\node at (2.2,-5){\rotatebox{90}{1}};
\node at (1.2,-5.2){\rotatebox{90}{2}};
\node at (-1.2,-5.2){\rotatebox{90}{2}};
\node at (-0.2,-5){\rotatebox{90}{2}};
\node at (1,-4.2){\rotatebox{90}{2}};
\node at (-1,-4.2){\rotatebox{90}{2}};
\node at (-1.2,-2.8){\rotatebox{90}{3}};
\node at (0.2,-3){\rotatebox{90}{1}};
\node at (2.2,-2.8){\rotatebox{90}{3}};
\node at (1.2,-1.6){\rotatebox{90}{3}};
\node at (4.2,-0.7){\rotatebox{90}{2}};
\node at (1.8,-0.8){\rotatebox{90}{1}};
\node at (3,0.4){\rotatebox{90}{2}};
\node at (3.2,-0.7){\rotatebox{90}{2}};
\node at (5,0.4){\rotatebox{90}{3}};
\node at (5.2,-1){\rotatebox{90}{1}};
\node at (1,-3.6){\rotatebox{90}{2}};
\node at (1.1,-2.7){\rotatebox{90}{3}};
\node at (3.2,-2.9){\rotatebox{90}{3}};
\node at (2.8,-1.6){\rotatebox{90}{3}};
\fill [gray,opacity=.2] (2)--(16)--(15)--(2);
\fill [gray,opacity=.2] (17)--(16)--(15)--(17);
\fill [gray,opacity=.6] (17)--(16)--(14)--(17);
\end{tikzpicture}}}
\begin{tikzpicture}[baseline=20mm]
\node at (0,0){$\mapsto$};
\end{tikzpicture}
\rotatebox{-90}{\scalebox{0.6}{\begin{tikzpicture}
\coordinate(1) at (0,0){}; 
\coordinate(2) at (0,-2){}; 
\coordinate(3) at (0,-4){}; 
\coordinate(5) at (-2,0){}; 
\coordinate(6) at (-2,-2){}; 
\coordinate(7) at (2,-4){}; 
\coordinate(8) at (0,-6){}; 
\coordinate(9) at (2,-6){}; 
\coordinate(10) at (-2,0){}; 
\coordinate(11) at (-2,-2){}; 
\coordinate(12) at (-2,-6){};
\coordinate(13) at (-2,-4){}; 
\coordinate(14) at (2,0.2){}; 
\coordinate(15) at (2,-3.8){};
\coordinate(16) at (2,-1.8){};
\coordinate(17) at (4,-1.8){};
\coordinate(18) at (4,0.2){};
\coordinate(20) at (0,0.2){};
\coordinate(21) at (-2,0.2){};
\coordinate(22) at (0,-1.8){};
\draw(2) to (3);
\draw(3) to (7);
\draw(7) to (9);
\draw(3) to (8);
\draw(8) to (9);
\draw(8) to (7);
\draw(13) to (3);
\draw(13) to (2);
\draw(8) to (13);
\draw(3) to (15);
\draw(2) to (16);
\draw(14) to (15);
\draw(2) to (15);
\draw(15) to (17);
\draw(16) to (17);
\draw(14) to (17);
\draw(16) to (22);
\draw(14) to (20);
\draw(21) to (20);
\draw(20) to (22);
\draw(14) to (22);
\draw(21) to (22);
\node at (1,-6.2){\rotatebox{90}{3}};
\node at (2.2,-5){\rotatebox{90}{1}};
\node at (1.2,-5.2){\rotatebox{90}{2}};
\node at (-1.2,-5.2){\rotatebox{90}{2}};
\node at (-0.2,-5){\rotatebox{90}{2}};
\node at (1,-4.2){\rotatebox{90}{2}};
\node at (-1,-4.2){\rotatebox{90}{2}};
\node at (-1.2,-2.8){\rotatebox{90}{3}};
\node at (0.2,-3){\rotatebox{90}{1}};
\node at (2.2,-2.8){\rotatebox{90}{3}};
\node at (1.2,-1.6){\rotatebox{90}{3}};
\node at (1.2,-2.1){\rotatebox{90}{3}};
\node at (-0.3,-0.7){\rotatebox{90}{2}};
\node at (1.8,-0.8){\rotatebox{90}{1}};
\node at (1,0.4){\rotatebox{90}{2}};
\node at (3.2,-0.7){\rotatebox{90}{2}};
\node at (-1,0.4){\rotatebox{90}{3}};
\node at (-1.2,-1){\rotatebox{90}{1}};
\node at (1,-3.6){\rotatebox{90}{2}};
\node at (1.1,-2.7){\rotatebox{90}{3}};
\node at (3.2,-2.9){\rotatebox{90}{3}};
\node at (2.8,-1.6){\rotatebox{90}{3}};
\node at (0.7,-0.8){\rotatebox{90}{2}};
\fill [gray,opacity=.6] (17)--(16)--(14)--(17);
\fill [gray,opacity=.6] (22)--(16)--(14)--(22);
\fill [gray,opacity=.2] (22)--(14)--(20)--(22);
\end{tikzpicture}}}
\begin{tikzpicture}[baseline=20mm]
\node at (0,0){$\mapsto$};
\end{tikzpicture}
\rotatebox{-90}{\scalebox{0.6}{\begin{tikzpicture}[baseline=-50mm]
\coordinate(1) at (0,0){}; 
\coordinate(2) at (0,-2){}; 
\coordinate(3) at (0,-4){}; 
\coordinate(5) at (-2,0){}; 
\coordinate(6) at (-2,-2){}; 
\coordinate(7) at (2,-4){}; 
\coordinate(8) at (0,-6){}; 
\coordinate(9) at (2,-6){}; 
\coordinate(10) at (-2,0){}; 
\coordinate(11) at (-2,-2){}; 
\coordinate(12) at (-2,-6){};
\coordinate(13) at (-2,-4){}; 
\coordinate(14) at (2,0.2){}; 
\coordinate(15) at (2,-3.8){};
\coordinate(16) at (2,-1.8){};
\coordinate(17) at (4,-1.8){};
\coordinate(18) at (4,0.2){};
\coordinate(20) at (0,0.2){};
\coordinate(21) at (-2,0.2){};
\coordinate(22) at (0,-1.8){};
\coordinate(23) at (0,2.2){};
\draw(2) to (3);
\draw(3) to (7);
\draw(7) to (9);
\draw(3) to (8);
\draw(8) to (9);
\draw(8) to (7);
\draw(13) to (3);
\draw(13) to (2);
\draw(8) to (13);
\draw(3) to (15);
\draw(2) to (16);
\draw(14) to (15);
\draw(2) to (15);
\draw(15) to (17);
\draw(16) to (17);
\draw(14) to (17);
\draw(16) to (22);
\draw(14) to (20);
\draw(21) to (20);
\draw(20) to (22);
\draw(14) to (22);
\draw(21) to (23);
\draw(14) to (23);
\draw(20) to (23);
\node at (1,-6.2){\rotatebox{90}{3}};
\node at (2.2,-5){\rotatebox{90}{1}};
\node at (1.2,-5.2){\rotatebox{90}{2}};
\node at (-1.2,-5.2){\rotatebox{90}{2}};
\node at (-0.2,-5){\rotatebox{90}{2}};
\node at (1,-4.2){\rotatebox{90}{2}};
\node at (-1,-4.2){\rotatebox{90}{2}};
\node at (-1.2,-2.8){\rotatebox{90}{3}};
\node at (0.2,-3){\rotatebox{90}{1}};
\node at (2.2,-2.8){\rotatebox{90}{3}};
\node at (1.2,-1.6){\rotatebox{90}{3}};
\node at (1.2,-2.1){\rotatebox{90}{3}};
\node at (-0.2,-0.7){\rotatebox{90}{2}};
\node at (-0.2,1){\rotatebox{90}{2}};
\node at (1.8,-0.8){\rotatebox{90}{1}};
\node at (1,0.4){\rotatebox{90}{2}};
\node at (3.2,-0.7){\rotatebox{90}{2}};
\node at (-0.8,0){\rotatebox{90}{3}};
\node at (-1.2,1.2){\rotatebox{90}{1}};
\node at (1,-3.6){\rotatebox{90}{2}};
\node at (1.1,-2.7){\rotatebox{90}{3}};
\node at (3.2,-2.9){\rotatebox{90}{3}};
\node at (2.8,-1.6){\rotatebox{90}{3}};
\node at (0.7,-0.8){\rotatebox{90}{2}};
\node at (1,1.4){\rotatebox{90}{2}};
\fill [gray,opacity=.2] (22)--(14)--(20)--(22);
\fill [gray,opacity=.2] (23)--(14)--(20)--(23);
\end{tikzpicture}}}
\begin{tikzpicture}[baseline=20mm]
\node at (0,0){$\mapsto$};
\end{tikzpicture}
\rotatebox{0}{\scalebox{0.6}{\begin{tikzpicture}[baseline=0mm]
\coordinate(1) at (0,0){}; 
\coordinate(2) at (2,0){}; 
\coordinate(3) at (4,0){}; 
\coordinate(4) at (-4,-2){};
\coordinate(5) at (-2,-2){}; 
\coordinate(6) at (0,-2){}; 
\coordinate(7) at (2,-2){};
\coordinate(8) at (4,-2){};
\coordinate(9) at (-4,-4){};
\coordinate(10) at (-2,-4){}; 
\coordinate(11) at (0,-4){}; 
\coordinate(12) at (2,-4){};
\coordinate(13) at (-4,-6){}; 
\coordinate(14) at (-2,-6){}; 
\draw(1) to (3);
\draw(4) to (8);
\draw(9) to (12);
\draw(13) to (14);
\draw(3) to (8);
\draw(2) to (12);
\draw(1) to (11);
\draw(5) to (14);
\draw(4) to (13);
\draw(2) to (8);
\draw(1) to (7);
\draw(6) to (12);
\draw(5) to (11);
\draw(4) to (10);
\draw(9) to (14);
\node at (-3,-6.2){{1}};
\node at (-4.2,-5){{3}};
\node at (-1.8,-5){{2}};
\node at (-3.2,-5.1){{2}};
\node at (-3,-3.8){{2}};
\node at (-4.2,-3){{2}};
\node at (-2.8,-2.8){{2}};
\node at (-3,-1.8){{3}};
\node at (-1.8,-3){{1}};
\node at (-1,-4.2){{2}};
\node at (-0.8,-2.8){{3}};
\node at (-1,-1.8){{3}};
\node at (0.2,-3){{3}};
\node at (1,-4.2){{3}};
\node at (1.2,-2.8){{3}};
\node at (2.2,-3){{2}};
\node at (1,-1.8){{1}};
\node at (-0.2,-1){{3}};
\node at (1.2,-0.8){{2}};
\node at (1,0.2){{2}};
\node at (2.2,-1){{2}};
\node at (3,-2.2){{2}};
\node at (3.2,-0.8){{2}};
\node at (4.2,-1){{1}};
\node at (3,0.2){{3}};
\end{tikzpicture}}}
\begin{tikzpicture}[baseline=20mm]
\node at (0,0){$\mapsto$};
\end{tikzpicture}
\rotatebox{0}{\scalebox{0.6}{\begin{tikzpicture}[baseline=0mm]
\coordinate(1) at (0,0){}; 
\coordinate(2) at (2,0){}; 
\coordinate(3) at (4,0){}; 
\coordinate(4) at (-4,-2){};
\coordinate(5) at (-2,-2){}; 
\coordinate(6) at (0,-2){}; 
\coordinate(7) at (2,-2){};
\coordinate(8) at (4,-2){};
\coordinate(9) at (-4,-4){};
\coordinate(10) at (-2,-4){}; 
\coordinate(11) at (0,-4){}; 
\coordinate(12) at (2,-4){};
\coordinate(13) at (-4,-6){}; 
\coordinate(14) at (-2,-6){}; 
\draw(1) to (3);
\draw(4) to (8);
\draw(9) to (12);
\draw(13) to (14);
\draw(3) to (8);
\draw(2) to (12);
\draw(1) to (11);
\draw(5) to (14);
\draw(4) to (13);
\node at (-3,-6.2){{1}};
\node at (-4.2,-5){{3}};
\node at (-1.8,-5){{2}};
\node at (-3,-3.8){{2}};
\node at (-4.2,-3){{2}};
\node at (-3,-1.8){{3}};
\node at (-1.8,-3){{1}};
\node at (-1,-4.2){{2}};
\node at (-1,-1.8){{3}};
\node at (0.2,-3){{3}};
\node at (1,-4.2){{3}};
\node at (2.2,-3){{2}};
\node at (1,-1.8){{1}};
\node at (-0.2,-1){{3}};
\node at (1,0.2){{2}};
\node at (2.2,-1){{2}};
\node at (3,-2.2){{2}};
\node at (4.2,-1){{1}};
\node at (3,0.2){{3}};
\end{tikzpicture}}}
\end{figure}
\end{example}
By using the snake graph, we give cluster variables in the cluster pattern $CP_{(\xx,\Delta,2I_3)}$.

Let $\gamma$ be an arc in $\mathcal O_3$. We assume that $\gamma$ crosses $l_{i_1},\dots,l_{i_m}$ in order (except for its endpoints), where $i_j\in\{1,2,3\}$. We define $\mathrm{cross}(T_0,\gamma)$ as
\[\mathrm{cross}(T_0,\gamma)=\prod_{j=1}^m x_{i_j}.\]
Let $P$ be a perfect matching of $G_{T_0,\gamma}$. When $P$ consists edges labeled $i_1,\dots,i_k$, we define the \emph{weight} of $P$ as 
\[x(P)=\prod_{j=1}^k x_{i_j}.\]
\begin{remark}
The assumption that orders of the orbifold points of $\mathcal O_3$ are all 3 is used here. In general, when the order of an orbifold point is $p$, the variable associated with an edge in $G_{T_0,\gamma}$ that corresponds to the monogon $l_i$ surrounding this orbifold point in $\mathcal O_3$ is $2x_i\cos \pi/p$.
\end{remark}
The following theorem is a special case of \cite{bake}*{Theorem 1.1}.
\begin{theorem}\label{clustervariable-arc}\noindent
\begin{itemize}
    \item[(1)] For an arc $\gamma$ in $\mathcal{O}_3$, 
\[x_{\gamma}=\dfrac{1}{\mathrm{cross}(T_0,\gamma)}\sum_Px(P)\]
is a cluster variable of the cluster pattern $CP_{(\xx,\Delta,2I_3)}$, where the sum runs over all the perfect matching $P$. 

 \item[(2)]The correspondence $\varphi\colon \gamma\mapsto x_\gamma$ is a bijection from arcs in $\mathcal O_3$ to cluster variables in $CP_{(\xx,\Delta,2I_3)}$, and induces a bijection $\Phi$ from clusters in $CP_{(\xx,\Delta,2I_3)}$ and triangulations of $\mathcal O_3$.
 \item[(3)] The bijection $\Phi$ is compatible with mutations and flips, that is, for any triangulation $T$, we have
 \[\Phi(f_k(T))=\mu_k(\Phi(T)).\]
\end{itemize}
\end{theorem}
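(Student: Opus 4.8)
The plan is to reduce the statement to \cite{bake}*{Theorem 1.1} by verifying that the generalized cluster pattern $CP_{(\xx,\Delta,2I_3)}$ is exactly the one attached by Chekhov--Shapiro \cite{chsh} and Banaian--Kelley \cite{bake} to the orbifold $\mathcal{O}_3$ equipped with the initial triangulation $T_0=(l_1,l_2,l_3)$ of Figure \ref{figure5}. First I would match the initial data: identify the cluster $\xx=(x_1,x_2,x_3)$ with $T_0$, verify that the skew-symmetric matrix $\Delta$ equals the exchange matrix associated with $T_0$ read off from the signed adjacencies of $l_1,l_2,l_3$ around the puncture, and observe that the associated diagonal matrix $D=2I_3$ records that each orbifold point has order $3$, so that each pending arc contributes the degree-$2$ exchange polynomial with the three summands visible in \eqref{eq:x-mutation}. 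Under this dictionary the mutation rule \eqref{eq:x-mutation} for $CP_{(\xx,\Delta,2I_3)}$ is the algebraic shadow of a flip $f_k$ of $T_0$.

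Two features specific to $\mathcal{O}_3$ must be checked before \cite{bake}*{Theorem 1.1} applies. First, although \cite{bake} is formulated for orbifolds without punctures, its statement allows a single puncture provided no tagged arc occurs; since every arc of $\mathcal{O}_3$ ends either at an orbifold point or at the unique puncture, and the pending arcs surrounding the orbifold points are never tagged in the present setting, this hypothesis is satisfied. Second, because all orbifold points have order $3$ and $2\cos(\pi/3)=1$, the edge of $G_{T_0,\gamma}$ coming from a monogon around an orbifold point carries weight $x_i$ rather than the general $2x_i\cos(\pi/p)$; hence the weight $x(P)$ and the monomial $\mathrm{cross}(T_0,\gamma)$ defined above coincide with the Banaian--Kelley data specialized to $p=3$.

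The main obstacle is to verify that the snake graph $G_{T_0,\gamma}$ produced here by the unfolding procedure agrees, as a labeled graph, with the snake graph of \cite{bake}, whose construction is direct rather than by unfolding. I would compare the two constructions crossing by crossing: each crossing of $\gamma$ with some $l_i$ contributes one tile, and the clockwise/counterclockwise rule of Figure \ref{figure6} for passing through a monogon must be matched with the left/right gluing rule at an orbifold point in \cite{bake}. The alternation of same-labeled triangles with $1,2,3$-labeled triangles noted after Figure \ref{figure7} encodes the local combinatorics of $\mathcal{O}_3$ and should make this comparison purely local. Once the labeled graphs are identified, their sets of perfect matchings and the corresponding weights coincide, and part (1) follows from \cite{bake}*{Theorem 1.1}.

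Granting part (1), parts (2) and (3) are the topological content of \cite{bake}*{Theorem 1.1} transported through the dictionary above. The map $\varphi\colon\gamma\mapsto x_\gamma$ is the Banaian--Kelley bijection between arcs of $\mathcal{O}_3$ and cluster variables of $CP_{(\xx,\Delta,2I_3)}$; it sends pairwise compatible arcs to the cluster variables of a common cluster, and therefore induces a bijection $\Phi$ from triangulations to clusters. Finally, since a flip $f_k$ of a triangulation corresponds under $\varphi$ to the seed mutation $\mu_k$, we obtain $\Phi(f_k(T))=\mu_k(\Phi(T))$, which is part (3).
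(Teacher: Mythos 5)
Your proposal takes essentially the same route as the paper: the paper gives no independent proof of this theorem, stating it as a special case of \cite{bake}*{Theorem 1.1}, with the single-puncture caveat handled in a footnote and the order-$3$ specialization ($2\cos(\pi/3)=1$) noted in the remark preceding the theorem. Your write-up simply makes explicit the dictionary and the checks (initial seed versus $T_0$, puncture/tagged-arc hypothesis, weight specialization, agreement of the unfolding snake graph with the Banaian--Kelley one) that the paper leaves implicit in that citation.
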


If we substitute $x_1=x_2=x_3=1$ in the formula of Theorem \ref{clustervariable-arc} (1), then we have the following corollary by Theorem \ref{clusterstructure}:

\begin{corollary}\label{cor:number-matching}\noindent
\begin{itemize}
    \item [(1)]
Let $\gamma$ be an arc in $\mathcal O_3$ and $G_{T_0,\gamma}$ the snake graph of $\gamma$. Then, the number of perfect matchings of $G_{T_0,\gamma}$ appears in the tree $\mathbb T$.
\item[(2)]
Let $T=(\gamma_1,\gamma_2,\gamma_3)$ be a triangulation of $\mathcal O_3$. Then, the triplet of numbers of perfect matchings of $G_{T_0,\gamma_1},G_{T_0,\gamma_2},G_{T_0,\gamma_3}$ is a positive integer solution to \eqref{Diophantine}. 
\item[(3)] In the correspondence of (2) from triangulations to integer solutions, the integer solution corresponding to $f_k(T)$ is the neighbor solution of that corresponding to $T$ in $\TT'_3$, connected by an edge labeled by $k$.
\end{itemize}
\end{corollary}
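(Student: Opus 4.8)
The plan is to derive all three parts from a single specialization observation together with Theorems~\ref{clustervariable-arc}, \ref{clusterstructure}, and \ref{Diophantinetheorem} and Proposition~\ref{inductive-solution}. The observation is that, for any arc $\gamma$, both $\mathrm{cross}(T_0,\gamma)$ and every weight $x(P)$ are monomials in $x_1,x_2,x_3$, so each specializes to $1$ under $x_1=x_2=x_3=1$. Hence, by Theorem~\ref{clustervariable-arc}(1),
\[
x_\gamma\big|_{x_1=x_2=x_3=1}=\sum_P 1=\#\{\text{perfect matchings of } G_{T_0,\gamma}\},
\]
so specializing a cluster variable simply counts perfect matchings of the corresponding snake graph. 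This reduces the whole corollary to tracking specialized clusters along the cluster pattern $CP_{(\xx,\Delta,2I_3)}$.

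For part~(2), I would use the bijection $\Phi$ of Theorem~\ref{clustervariable-arc}(2) to identify a triangulation $T=(\gamma_1,\gamma_2,\gamma_3)$ with a cluster $(x_{\gamma_1},x_{\gamma_2},x_{\gamma_3})$ placed at some vertex $t\in\TT_3$; by the observation its specialization is exactly the triple of perfect-matching numbers. Since $\TT_3$ is connected, $t$ is joined to $t_0$ by finitely many mutations, and the initial cluster specializes to the positive integer solution $(1,1,1)$ of \eqref{Diophantine}. Because substitution is a ring homomorphism, the specialized mutation in direction $k$ is $(x_k,x_\ell,x_m)\mapsto(6x_\ell x_m-x_k-x_\ell-x_m,x_\ell,x_m)$, which by Proposition~\ref{inductive-solution} carries a positive integer solution to a positive integer solution; in particular every specialized cluster variable is a positive integer, so no division by zero occurs when one passes between the Laurent and the polynomial forms of the mutation. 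Inducting along the path from $t_0$ to $t$ then shows that the perfect-matching triple solves \eqref{Diophantine} with positive integer entries.

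Part~(1) follows at once: given an arc $\gamma$, extend it to a triangulation $T=(\gamma,\gamma',\gamma'')$; by part~(2) the associated perfect-matching triple is a positive integer solution, so by Theorem~\ref{Diophantinetheorem} it appears (up to order) in $\mathbb{T}$, whence its entry $\#\{\text{perfect matchings of } G_{T_0,\gamma}\}$ is a number of $\mathbb{T}$. For part~(3), I would invoke the compatibility $\Phi(f_k(T))=\mu_k(\Phi(T))$ of Theorem~\ref{clustervariable-arc}(3): a flip in direction $k$ is a mutation in direction $k$, and after specialization the latter is precisely the generation rule defining the edge labeled $k$ in $\TT'_3$ (Figure~\ref{T'3}), so $f_k(T)$ is sent to the $k$-labeled neighbor of the solution attached to $T$.

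The main obstacle is making part~(2) rigorous, namely certifying that the perfect-matching triple is a genuine positive integer solution rather than merely a rational specialization. This rests on two points: that specialization commutes with mutation without ever dividing by zero, and that every triangulation's cluster is reachable from the initial seed, so that the inductive use of Proposition~\ref{inductive-solution} applies uniformly. Both are controlled by the positivity of all specialized cluster variables, which is exactly what the perfect-matching interpretation furnishes; thus the apparent circularity between needing positivity and establishing it is dissolved by running the induction from $(1,1,1)$.
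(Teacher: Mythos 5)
Your proof is correct, and its engine --- the observation that $\mathrm{cross}(T_0,\gamma)$ and all weights $x(P)$ are monomials, so that the expansion of Theorem \ref{clustervariable-arc}(1) specializes at $x_1=x_2=x_3=1$ to a perfect-matching count --- is exactly the paper's. The difference is in how the argument is closed. The paper's proof is a one-liner: it cites Theorem \ref{clusterstructure}, which already asserts that the specialized cluster pattern restricted to $\TT'_3$ is the tree $\TT$, and then reads off (1)--(3) through the bijection and the flip/mutation compatibility of Theorem \ref{clustervariable-arc}(2),(3). You instead bypass Theorem \ref{clusterstructure} for the substantive step: you prove part (2) by a fresh induction along the path in $\TT_3$ from $t_0$ to the vertex carrying the cluster of $T$, using Proposition \ref{inductive-solution} together with the on-shell identification of the Laurent-form mutation $(b^2+bc+c^2)/a$ with the polynomial form $6bc-a-b-c$, and you then deduce part (1) from part (2) via Theorem \ref{Diophantinetheorem}. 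Each route buys something. The paper's is shorter, since Theorem \ref{clusterstructure} is already in place. Yours is more self-contained and slightly more robust: an arbitrary arc or triangulation corresponds to a cluster at an arbitrary vertex of $\TT_3$, whereas Theorem \ref{clusterstructure} speaks only of the restriction to the subtree $\TT'_3$, so the paper's deduction implicitly relies on the symmetry of the initial seed and the involutivity of mutation to handle clusters sitting at the pruned vertices; your induction over all of $\TT_3$, combined with the ``up to order'' clause of Theorem \ref{Diophantinetheorem}, covers this uniformly. Your explicit remarks that positivity of the specialized variables rules out division by zero, and that the polynomial form of the mutation is valid only on solutions (hence the induction must start from $(1,1,1)$), make precise exactly the points the paper leaves implicit in the sentence preceding Theorem \ref{clusterstructure}.
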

\begin{example}
Let $\gamma$ be one in Example \ref{gamma-ex}. Then, we have $13$ perfect matchings (see Figure \ref{figure10}). We have
\[x_{\gamma}=\dfrac{x_1^4 + 2 x_1^3 x_2 + x_1^3 x_3 + 3 x_1^2 x_2^2 + x_1^2 x_2 x_3 + x_1^2 x_3^2 + 2 x_1 x_2^3 + x_1 x_2^2 x_3 + x_2^4}{x_2 x_3^2}.\]
This is the second element of $\mu_2\mu_3(x_1,x_2,x_3)$ and ``13" appears in $\TT$.
\begin{figure}[ht]
    \centering
    \caption{Perfect matchings of $G_{T_0,\gamma}$}
    \label{figure10}
\scalebox{3}{\begin{tikzpicture}[baseline=0mm]
\coordinate(1) at (0,0){}; 
\coordinate(2) at (0.2,0){}; 
\coordinate(3) at (0.4,0){}; 
\coordinate(4) at (-0.4,-0.2){};
\coordinate(5) at (-0.2,-0.2){}; 
\coordinate(6) at (0,-0.2){}; 
\coordinate(7) at (0.2,-0.2){};
\coordinate(8) at (0.4,-0.2){};
\coordinate(9) at (-0.4,-0.4){};
\coordinate(10) at (-0.2,-0.4){}; 
\coordinate(11) at (0,-0.4){}; 
\coordinate(12) at (0.2,-0.4){};
\coordinate(13) at (-0.4,-0.6){}; 
\coordinate(14) at (-0.2,-0.6){}; 
\draw[very thick, red](1) to (2);
\draw(2) to (3);
\draw[very thick, red](4) to (5);
\draw(5) to (6);
\draw[very thick, red](6) to (7);
\draw(7) to (8);
\draw[very thick, red](9) to (10);
\draw(10) to (11);
\draw[very thick, red](11) to (12);
\draw[very thick, red](13) to (14);
\draw[very thick, red](3) to (8);
\draw(2) to (7);
\draw(7) to (12);
\draw(1) to (6);
\draw(6) to (11);
\draw(5) to (10);
\draw(10) to (14);
\draw(4) to (9);
\draw(9) to (13);
\end{tikzpicture}}\hspace{3mm}
\scalebox{3}{\begin{tikzpicture}[baseline=0mm]
\coordinate(1) at (0,0){}; 
\coordinate(2) at (0.2,0){}; 
\coordinate(3) at (0.4,0){}; 
\coordinate(4) at (-0.4,-0.2){};
\coordinate(5) at (-0.2,-0.2){}; 
\coordinate(6) at (0,-0.2){}; 
\coordinate(7) at (0.2,-0.2){};
\coordinate(8) at (0.4,-0.2){};
\coordinate(9) at (-0.4,-0.4){};
\coordinate(10) at (-0.2,-0.4){}; 
\coordinate(11) at (0,-0.4){}; 
\coordinate(12) at (0.2,-0.4){};
\coordinate(13) at (-0.4,-0.6){}; 
\coordinate(14) at (-0.2,-0.6){}; 
\draw(1) to (2);
\draw[very thick, red](2) to (3);
\draw[very thick, red](4) to (5);
\draw(5) to (6);
\draw(6) to (7);
\draw[very thick, red](7) to (8);
\draw[very thick, red](9) to (10);
\draw(10) to (11);
\draw[very thick, red](11) to (12);
\draw[very thick, red](13) to (14);
\draw(3) to (8);
\draw(2) to (7);
\draw(7) to (12);
\draw[very thick, red](1) to (6);
\draw(6) to (11);
\draw(5) to (10);
\draw(10) to (14);
\draw(4) to (9);
\draw(9) to (13);
\end{tikzpicture}}\hspace{3mm}
\scalebox{3}{\begin{tikzpicture}[baseline=0mm]
\coordinate(1) at (0,0){}; 
\coordinate(2) at (0.2,0){}; 
\coordinate(3) at (0.4,0){}; 
\coordinate(4) at (-0.4,-0.2){};
\coordinate(5) at (-0.2,-0.2){}; 
\coordinate(6) at (0,-0.2){}; 
\coordinate(7) at (0.2,-0.2){};
\coordinate(8) at (0.4,-0.2){};
\coordinate(9) at (-0.4,-0.4){};
\coordinate(10) at (-0.2,-0.4){}; 
\coordinate(11) at (0,-0.4){}; 
\coordinate(12) at (0.2,-0.4){};
\coordinate(13) at (-0.4,-0.6){}; 
\coordinate(14) at (-0.2,-0.6){}; 
\draw(1) to (2);
\draw(2) to (3);
\draw[very thick, red](4) to (5);
\draw(5) to (6);
\draw(6) to (7);
\draw(7) to (8);
\draw[very thick, red](9) to (10);
\draw(10) to (11);
\draw[very thick, red](11) to (12);
\draw[very thick, red](13) to (14);
\draw[very thick, red](3) to (8);
\draw[very thick, red](2) to (7);
\draw(7) to (12);
\draw[very thick, red](1) to (6);
\draw(6) to (11);
\draw(5) to (10);
\draw(10) to (14);
\draw(4) to (9);
\draw(9) to (13);
\end{tikzpicture}}\hspace{3mm}
\scalebox{3}{\begin{tikzpicture}[baseline=0mm]
\coordinate(1) at (0,0){}; 
\coordinate(2) at (0.2,0){}; 
\coordinate(3) at (0.4,0){}; 
\coordinate(4) at (-0.4,-0.2){};
\coordinate(5) at (-0.2,-0.2){}; 
\coordinate(6) at (0,-0.2){}; 
\coordinate(7) at (0.2,-0.2){};
\coordinate(8) at (0.4,-0.2){};
\coordinate(9) at (-0.4,-0.4){};
\coordinate(10) at (-0.2,-0.4){}; 
\coordinate(11) at (0,-0.4){}; 
\coordinate(12) at (0.2,-0.4){};
\coordinate(13) at (-0.4,-0.6){}; 
\coordinate(14) at (-0.2,-0.6){}; 
\draw[very thick, red](1) to (2);
\draw(2) to (3);
\draw[very thick, red](4) to (5);
\draw(5) to (6);
\draw(6) to (7);
\draw(7) to (8);
\draw[very thick, red](9) to (10);
\draw(10) to (11);
\draw(11) to (12);
\draw[very thick, red](13) to (14);
\draw[very thick, red](3) to (8);
\draw(2) to (7);
\draw[very thick, red](7) to (12);
\draw(1) to (6);
\draw[very thick, red](6) to (11);
\draw(5) to (10);
\draw(10) to (14);
\draw(4) to (9);
\draw(9) to (13);
\end{tikzpicture}}\hspace{3mm}
\scalebox{3}{\begin{tikzpicture}[baseline=0mm]
\coordinate(1) at (0,0){}; 
\coordinate(2) at (0.2,0){}; 
\coordinate(3) at (0.4,0){}; 
\coordinate(4) at (-0.4,-0.2){};
\coordinate(5) at (-0.2,-0.2){}; 
\coordinate(6) at (0,-0.2){}; 
\coordinate(7) at (0.2,-0.2){};
\coordinate(8) at (0.4,-0.2){};
\coordinate(9) at (-0.4,-0.4){};
\coordinate(10) at (-0.2,-0.4){}; 
\coordinate(11) at (0,-0.4){}; 
\coordinate(12) at (0.2,-0.4){};
\coordinate(13) at (-0.4,-0.6){}; 
\coordinate(14) at (-0.2,-0.6){}; 
\draw[very thick, red](1) to (2);
\draw(2) to (3);
\draw(4) to (5);
\draw(5) to (6);
\draw[very thick, red](6) to (7);
\draw(7) to (8);
\draw(9) to (10);
\draw(10) to (11);
\draw[very thick, red](11) to (12);
\draw[very thick, red](13) to (14);
\draw[very thick, red](3) to (8);
\draw(2) to (7);
\draw(7) to (12);
\draw(1) to (6);
\draw(6) to (11);
\draw[very thick, red](5) to (10);
\draw(10) to (14);
\draw[very thick, red](4) to (9);
\draw(9) to (13);
\end{tikzpicture}}\\
\scalebox{3}{\begin{tikzpicture}[baseline=0mm]
\coordinate(1) at (0,0){}; 
\coordinate(2) at (0.2,0){}; 
\coordinate(3) at (0.4,0){}; 
\coordinate(4) at (-0.4,-0.2){};
\coordinate(5) at (-0.2,-0.2){}; 
\coordinate(6) at (0,-0.2){}; 
\coordinate(7) at (0.2,-0.2){};
\coordinate(8) at (0.4,-0.2){};
\coordinate(9) at (-0.4,-0.4){};
\coordinate(10) at (-0.2,-0.4){}; 
\coordinate(11) at (0,-0.4){}; 
\coordinate(12) at (0.2,-0.4){};
\coordinate(13) at (-0.4,-0.6){}; 
\coordinate(14) at (-0.2,-0.6){}; 
\draw(1) to (2);
\draw[very thick, red](2) to (3);
\draw(4) to (5);
\draw(5) to (6);
\draw(6) to (7);
\draw[very thick, red](7) to (8);
\draw(9) to (10);
\draw(10) to (11);
\draw[very thick, red](11) to (12);
\draw[very thick, red](13) to (14);
\draw(3) to (8);
\draw(2) to (7);
\draw(7) to (12);
\draw[very thick, red](1) to (6);
\draw(6) to (11);
\draw[very thick, red](5) to (10);
\draw(10) to (14);
\draw[very thick, red](4) to (9);
\draw(9) to (13);
\end{tikzpicture}}\hspace{3mm}
\scalebox{3}{\begin{tikzpicture}[baseline=0mm]
\coordinate(1) at (0,0){}; 
\coordinate(2) at (0.2,0){}; 
\coordinate(3) at (0.4,0){}; 
\coordinate(4) at (-0.4,-0.2){};
\coordinate(5) at (-0.2,-0.2){}; 
\coordinate(6) at (0,-0.2){}; 
\coordinate(7) at (0.2,-0.2){};
\coordinate(8) at (0.4,-0.2){};
\coordinate(9) at (-0.4,-0.4){};
\coordinate(10) at (-0.2,-0.4){}; 
\coordinate(11) at (0,-0.4){}; 
\coordinate(12) at (0.2,-0.4){};
\coordinate(13) at (-0.4,-0.6){}; 
\coordinate(14) at (-0.2,-0.6){}; 
\draw(1) to (2);
\draw(2) to (3);
\draw(4) to (5);
\draw(5) to (6);
\draw(6) to (7);
\draw(7) to (8);
\draw(9) to (10);
\draw(10) to (11);
\draw[very thick, red](11) to (12);
\draw[very thick, red](13) to (14);
\draw[very thick, red](3) to (8);
\draw[very thick, red](2) to (7);
\draw(7) to (12);
\draw[very thick, red](1) to (6);
\draw(6) to (11);
\draw[very thick, red](5) to (10);
\draw(10) to (14);
\draw[very thick, red](4) to (9);
\draw(9) to (13);
\end{tikzpicture}}\hspace{3mm}
\scalebox{3}{\begin{tikzpicture}[baseline=0mm]
\coordinate(1) at (0,0){}; 
\coordinate(2) at (0.2,0){}; 
\coordinate(3) at (0.4,0){}; 
\coordinate(4) at (-0.4,-0.2){};
\coordinate(5) at (-0.2,-0.2){}; 
\coordinate(6) at (0,-0.2){}; 
\coordinate(7) at (0.2,-0.2){};
\coordinate(8) at (0.4,-0.2){};
\coordinate(9) at (-0.4,-0.4){};
\coordinate(10) at (-0.2,-0.4){}; 
\coordinate(11) at (0,-0.4){}; 
\coordinate(12) at (0.2,-0.4){};
\coordinate(13) at (-0.4,-0.6){}; 
\coordinate(14) at (-0.2,-0.6){}; 
\draw[very thick, red](1) to (2);
\draw(2) to (3);
\draw(4) to (5);
\draw(5) to (6);
\draw(6) to (7);
\draw(7) to (8);
\draw(9) to (10);
\draw(10) to (11);
\draw(11) to (12);
\draw[very thick, red](13) to (14);
\draw[very thick, red](3) to (8);
\draw(2) to (7);
\draw[very thick, red](7) to (12);
\draw(1) to (6);
\draw[very thick, red](6) to (11);
\draw[very thick, red](5) to (10);
\draw(10) to (14);
\draw[very thick, red](4) to (9);
\draw(9) to (13);
\end{tikzpicture}}\hspace{3mm}
\scalebox{3}{\begin{tikzpicture}[baseline=0mm]
\coordinate(1) at (0,0){}; 
\coordinate(2) at (0.2,0){}; 
\coordinate(3) at (0.4,0){}; 
\coordinate(4) at (-0.4,-0.2){};
\coordinate(5) at (-0.2,-0.2){}; 
\coordinate(6) at (0,-0.2){}; 
\coordinate(7) at (0.2,-0.2){};
\coordinate(8) at (0.4,-0.2){};
\coordinate(9) at (-0.4,-0.4){};
\coordinate(10) at (-0.2,-0.4){}; 
\coordinate(11) at (0,-0.4){}; 
\coordinate(12) at (0.2,-0.4){};
\coordinate(13) at (-0.4,-0.6){}; 
\coordinate(14) at (-0.2,-0.6){}; 
\draw[very thick, red](1) to (2);
\draw(2) to (3);
\draw[very thick, red](4) to (5);
\draw(5) to (6);
\draw[very thick, red](6) to (7);
\draw(7) to (8);
\draw(9) to (10);
\draw(10) to (11);
\draw[very thick, red](11) to (12);
\draw(13) to (14);
\draw[very thick, red](3) to (8);
\draw(2) to (7);
\draw(7) to (12);
\draw(1) to (6);
\draw(6) to (11);
\draw(5) to (10);
\draw[very thick, red](10) to (14);
\draw(4) to (9);
\draw[very thick, red](9) to (13);
\end{tikzpicture}}\hspace{3mm}
\scalebox{3}{\begin{tikzpicture}[baseline=0mm]
\coordinate(1) at (0,0){}; 
\coordinate(2) at (0.2,0){}; 
\coordinate(3) at (0.4,0){}; 
\coordinate(4) at (-0.4,-0.2){};
\coordinate(5) at (-0.2,-0.2){}; 
\coordinate(6) at (0,-0.2){}; 
\coordinate(7) at (0.2,-0.2){};
\coordinate(8) at (0.4,-0.2){};
\coordinate(9) at (-0.4,-0.4){};
\coordinate(10) at (-0.2,-0.4){}; 
\coordinate(11) at (0,-0.4){}; 
\coordinate(12) at (0.2,-0.4){};
\coordinate(13) at (-0.4,-0.6){}; 
\coordinate(14) at (-0.2,-0.6){}; 
\draw(1) to (2);
\draw[very thick, red](2) to (3);
\draw[very thick, red](4) to (5);
\draw(5) to (6);
\draw(6) to (7);
\draw[very thick, red](7) to (8);
\draw(9) to (10);
\draw(10) to (11);
\draw[very thick, red](11) to (12);
\draw(13) to (14);
\draw(3) to (8);
\draw(2) to (7);
\draw(7) to (12);
\draw[very thick, red](1) to (6);
\draw(6) to (11);
\draw(5) to (10);
\draw[very thick, red](10) to (14);
\draw(4) to (9);
\draw[very thick, red](9) to (13);
\end{tikzpicture}}\\
\scalebox{3}{\begin{tikzpicture}[baseline=0mm]
\coordinate(1) at (0,0){}; 
\coordinate(2) at (0.2,0){}; 
\coordinate(3) at (0.4,0){}; 
\coordinate(4) at (-0.4,-0.2){};
\coordinate(5) at (-0.2,-0.2){}; 
\coordinate(6) at (0,-0.2){}; 
\coordinate(7) at (0.2,-0.2){};
\coordinate(8) at (0.4,-0.2){};
\coordinate(9) at (-0.4,-0.4){};
\coordinate(10) at (-0.2,-0.4){}; 
\coordinate(11) at (0,-0.4){}; 
\coordinate(12) at (0.2,-0.4){};
\coordinate(13) at (-0.4,-0.6){}; 
\coordinate(14) at (-0.2,-0.6){}; 
\draw(1) to (2);
\draw(2) to (3);
\draw[very thick, red](4) to (5);
\draw(5) to (6);
\draw(6) to (7);
\draw(7) to (8);
\draw(9) to (10);
\draw(10) to (11);
\draw[very thick, red](11) to (12);
\draw(13) to (14);
\draw[very thick, red](3) to (8);
\draw[very thick, red](2) to (7);
\draw(7) to (12);
\draw[very thick, red](1) to (6);
\draw(6) to (11);
\draw(5) to (10);
\draw[very thick, red](10) to (14);
\draw(4) to (9);
\draw[very thick, red](9) to (13);
\end{tikzpicture}}\hspace{3mm}
\scalebox{3}{\begin{tikzpicture}[baseline=0mm]
\coordinate(1) at (0,0){}; 
\coordinate(2) at (0.2,0){}; 
\coordinate(3) at (0.4,0){}; 
\coordinate(4) at (-0.4,-0.2){};
\coordinate(5) at (-0.2,-0.2){}; 
\coordinate(6) at (0,-0.2){}; 
\coordinate(7) at (0.2,-0.2){};
\coordinate(8) at (0.4,-0.2){};
\coordinate(9) at (-0.4,-0.4){};
\coordinate(10) at (-0.2,-0.4){}; 
\coordinate(11) at (0,-0.4){}; 
\coordinate(12) at (0.2,-0.4){};
\coordinate(13) at (-0.4,-0.6){}; 
\coordinate(14) at (-0.2,-0.6){}; 
\draw[very thick, red](1) to (2);
\draw(2) to (3);
\draw[very thick, red](4) to (5);
\draw(5) to (6);
\draw(6) to (7);
\draw(7) to (8);
\draw(9) to (10);
\draw(10) to (11);
\draw(11) to (12);
\draw(13) to (14);
\draw[very thick, red](3) to (8);
\draw(2) to (7);
\draw[very thick, red](7) to (12);
\draw(1) to (6);
\draw[very thick, red](6) to (11);
\draw(5) to (10);
\draw[very thick, red](10) to (14);
\draw(4) to (9);
\draw[very thick, red](9) to (13);
\end{tikzpicture}}\hspace{3mm}
\scalebox{3}{\begin{tikzpicture}[baseline=0mm]
\coordinate(1) at (0,0){}; 
\coordinate(2) at (0.2,0){}; 
\coordinate(3) at (0.4,0){}; 
\coordinate(4) at (-0.4,-0.2){};
\coordinate(5) at (-0.2,-0.2){}; 
\coordinate(6) at (0,-0.2){}; 
\coordinate(7) at (0.2,-0.2){};
\coordinate(8) at (0.4,-0.2){};
\coordinate(9) at (-0.4,-0.4){};
\coordinate(10) at (-0.2,-0.4){}; 
\coordinate(11) at (0,-0.4){}; 
\coordinate(12) at (0.2,-0.4){};
\coordinate(13) at (-0.4,-0.6){}; 
\coordinate(14) at (-0.2,-0.6){}; 
\draw[very thick, red](1) to (2);
\draw(2) to (3);
\draw(4) to (5);
\draw[very thick, red](5) to (6);
\draw(6) to (7);
\draw(7) to (8);
\draw(9) to (10);
\draw[very thick, red](10) to (11);
\draw(11) to (12);
\draw[very thick, red](13) to (14);
\draw[very thick, red](3) to (8);
\draw(2) to (7);
\draw[very thick, red](7) to (12);
\draw(1) to (6);
\draw(6) to (11);
\draw(5) to (10);
\draw(10) to (14);
\draw[very thick, red](4) to (9);
\draw(9) to (13);
\end{tikzpicture}}
\end{figure}
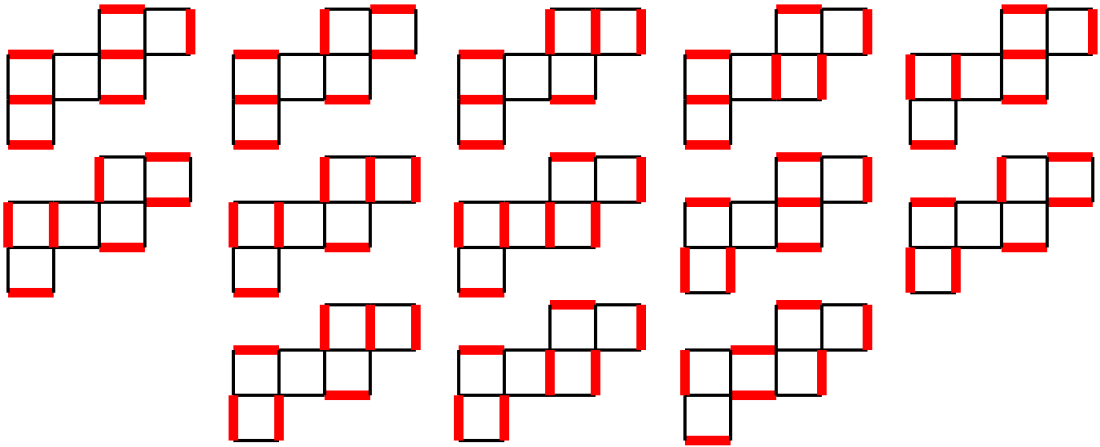
\end{example}
\begin{remark}
In Markov case, an analogue of Corollary \ref{cor:number-matching} was given by Propp in \cite{propp}.
\end{remark}
\subsection{Realization of integer solutions by the numerator of continued fraction}
In this subsection, we introduce another realization of positive integer solutions to \eqref{Diophantine} by using snake graph. 

Fix the orientation of the snake graph $G_{T_0,\gamma}$ so that the point corresponding to the start point of the $\gamma$ is in the southwest and the point corresponding to the terminal point is in the northeast. We will construct continued fraction from the snake graph $G_{T_0,\gamma}$. First, we attach a sign $+$ or $-$ to every edge of $G_{T_0,\gamma}$ using the following conditions: for every tile in $G_{T_0,\gamma}$, 
\begin{itemize}
    \item  the north and the west edge have the same sign, 
    \item the south and the east edge have the same sign,
    \item the sign on the north edge is opposite to the sign on the south edge. 
\end{itemize}
Such a way of attaching signs is uniquely determined by determining the sign of any one place, thus there are two ways of attaching a sign that are dual to each other. Either one will be fine for the discussion that follows. 
Second, we consider the curve $\tilde{\gamma}$ connecting the southwest point to the northeast point in $G_{T_0,\gamma}$, passing through the interior of the $G_{T_0,\gamma}$, such that the number of intersections with the edges of $G_{T_0,\gamma}$ is minimized. Third, we take the signs associated with the edges of $G_{T_0,\gamma}$ through which $\tilde{\gamma}$ passes and arrange them in order (the first sign is one associated the south edge, and the last sign is one associated the east edge). Finally, for the obtained sequence of signs, if the numbers followed by the same sign are $ a_1, ..., a_n $ in order, we give a continued fraction \[[a_1, ..., a_n]=a_1+\dfrac{1}{a_2+\dfrac{1}{a_3+\dfrac{1}{\ddots\dfrac{1}{a_{n-1}+\dfrac{1}{a_{n}}}}}}.\]
We denote by $\mathrm{Fr}(\gamma)$ the above continued fraction. The method of constructing continued fractions from snake graphs was given by \c{C}anak\c{c}{\i} and Schiffler in \cite{cs18}. See it for details. 

\begin{example}\label{example-contifrac}
Let $\gamma$ be one in Example \ref{gamma-ex}. Then signs of $G_{T_0,\gamma}$ is given as in the Figure 11 (red signs are the signs needed to construct a continued fraction). Then, we have
\[\mathrm{Fr}(\gamma)=[1,2,3,1]=1+\dfrac{1}{2+\dfrac{1}{3+\dfrac{1}{1}}}=\dfrac{13}{9}.\]
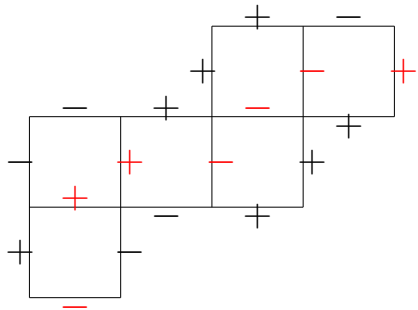
\begin{figure}[ht]
    \centering
    \caption{Sign attached each edge}
    \label{figure 11}
\rotatebox{0}{\scalebox{0.6}{\begin{tikzpicture}[baseline=0mm]
\coordinate(1) at (0,0){}; 
\coordinate(2) at (2,0){}; 
\coordinate(3) at (4,0){}; 
\coordinate(4) at (-4,-2){};
\coordinate(5) at (-2,-2){}; 
\coordinate(6) at (0,-2){}; 
\coordinate(7) at (2,-2){};
\coordinate(8) at (4,-2){};
\coordinate(9) at (-4,-4){};
\coordinate(10) at (-2,-4){}; 
\coordinate(11) at (0,-4){}; 
\coordinate(12) at (2,-4){};
\coordinate(13) at (-4,-6){}; 
\coordinate(14) at (-2,-6){}; 
\draw(1) to (3);
\draw(4) to (8);
\draw(9) to (12);
\draw(13) to (14);
\draw(3) to (8);
\draw(2) to (12);
\draw(1) to (11);
\draw(5) to (14);
\draw(4) to (13);
\node at (-3,-6.2){{\Huge \textcolor{red}{$-$}}};
\node at (-4.2,-5){{\Huge $+$}};
\node at (-1.8,-5){{\Huge $-$}};
\node at (-3,-3.8){{\Huge \textcolor{red}{$+$}}};
\node at (-4.2,-3){{\Huge $-$}};
\node at (-3,-1.8){{\Huge $-$}};
\node at (-1.8,-3){{\Huge \textcolor{red}{$+$}}};
\node at (-1,-4.2){{\Huge $-$}};
\node at (-1,-1.8){{\Huge $+$}};
\node at (0.2,-3){{\Huge \textcolor{red}{$-$}}};
\node at (1,-4.2){{\Huge $+$}};
\node at (2.2,-3){{\Huge $+$}};
\node at (1,-1.8){{\Huge \textcolor{red}{$-$}}};
\node at (-0.2,-1){{\Huge $+$}};
\node at (1,0.2){{\Huge $+$}};
\node at (2.2,-1){{\Huge \textcolor{red}{$-$}}};
\node at (3,-2.2){{\Huge $+$}};
\node at (4.2,-1){{\Huge \textcolor{red}{$+$}}};
\node at (3,0.2){{\Huge $-$}};
\end{tikzpicture}}}
\end{figure}
\end{example}

The following theorem is a special case of \cite{cs18}*{Theorem 3.4}:
\begin{theorem}
Let $\gamma$ be an arc in $\mathcal O_3$. Then, the number of perfect matchings of $G_{T_0,\gamma}$ equals to the numerator of $\mathrm{Fr}(\gamma)$. In particular, the numerator of $\mathrm{Fr}(\gamma)$ appears in $\mathbb T$.
\end{theorem}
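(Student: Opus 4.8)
The plan is to obtain the equality as a direct application of the Çanakçı--Schiffler correspondence between snake graphs and continued fractions, \cite{cs18}*{Theorem 3.4}, and then to deduce the final ``in particular'' clause by combining it with Corollary \ref{cor:number-matching}(1).

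First I would recall the exact content of \cite{cs18}*{Theorem 3.4}: to any snake graph $G$ one associates, through a sign function on its edges, a continued fraction $[a_1,\dots,a_n]$, and the number of perfect matchings of $G$ equals the numerator of $[a_1,\dots,a_n]$. The recipe used in this subsection---attaching a sign $\pm$ to each edge so that the three tile-local conditions hold, tracing the curve $\tilde{\gamma}$ from the southwest point to the northeast point with minimal intersection number, and then grouping maximal runs of equal signs along $\tilde{\gamma}$ into the integers $a_1,\dots,a_n$ (with the first sign read off the south edge and the last off the east edge)---is precisely the sign-function construction of \cite{cs18}. Consequently $\mathrm{Fr}(\gamma)=[a_1,\dots,a_n]$ is exactly the continued fraction that their theorem attaches to $G_{T_0,\gamma}$, so once the hypotheses are verified the first assertion follows verbatim.

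The only genuine verification is therefore structural rather than computational: one must confirm that $G_{T_0,\gamma}$, as produced by the unfolding procedure of the previous subsection (a connected sequence of square tiles glued along north/east edges, with diagonals removed), is a snake graph in the sense of \cite{cs18}, to which \cite{cs18}*{Theorem 3.4} applies. This is immediate from the construction of $G_{T_0,\gamma}$. Having established the equality between the number of perfect matchings and the numerator of $\mathrm{Fr}(\gamma)$, I would then invoke Corollary \ref{cor:number-matching}(1), which already asserts that the number of perfect matchings of $G_{T_0,\gamma}$ appears in $\mathbb{T}$; combining the two statements shows that the numerator of $\mathrm{Fr}(\gamma)$ appears in $\mathbb{T}$.

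The main obstacle I expect is not any calculation but the careful matching of conventions, so that the cited theorem may be quoted without alteration. One must check that the orientation choice (start point in the southwest, terminal point in the northeast), the tile-local sign rule, and the convention for the first and last signs all coincide with those of \cite{cs18}, and that the paper's diagonal-free, unfolded snake graph is the same abstract object as the snake graph appearing in \cite{cs18}*{Theorem 3.4}. The worked instance in Example \ref{example-contifrac}, where $\mathrm{Fr}(\gamma)=[1,2,3,1]=\tfrac{13}{9}$ has numerator $13$ matching the $13$ perfect matchings found in Example \ref{ex1}, serves as a consistency check that these conventions have been aligned correctly.
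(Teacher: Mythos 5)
Your proposal takes essentially the same route as the paper: the paper states this result precisely as a special case of \cite{cs18}*{Theorem 3.4}, with the ``in particular'' clause following from Corollary \ref{cor:number-matching}(1), which is exactly your argument. The only content beyond the citation is the verification that the conventions and the snake graph construction match those of \cite{cs18}, which you correctly identify as the sole point needing care.
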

In Example \ref{example-contifrac}, the numerator ``13" of $\mathrm{Fr}(\gamma)$ appears in $\TT$.

\bibliography{myrefs}
\end{document}